\DeclareMathOperator{\lcm}{lcm}
\DeclareMathOperator{\End}{End}
\DeclareMathOperator{\Pre}{Pre}
\renewcommand{\phi}[0]{\varphi}
\renewcommand{\theta}[0]{\vartheta}
\renewcommand{\epsilon}[0]{\varepsilon}
\newcommand{\B}{\text{$\mathfrak{B}$}}
\newcommand{\N}{\text{$\mathbf{N}$}}
\newcommand{\Z}{\text{$\mathbf{Z}$}}
\newcommand{\Q}{\text{$\mathbf{Q}$}}
\newcommand{\Pro}{\text{$\mathbb{P}^1$}}
\newcommand{\F}{\text{$\mathbb{F}$}}
\newtheorem{theorem}{Theorem}[section]
\newtheorem{lemma}[theorem]{Lemma}
\theoremstyle{definition}
\newtheorem{example}[theorem]{Example}
\theoremstyle{remark}
\newtheorem{remark}[theorem]{Remark}
\numberwithin{equation}{section}
\begin{document}

\bibliographystyle{amsplain}

\date{}
\subjclass[2010]{37P55, 37P05, 11G20}
\keywords{Dynamical systems, finite fields, arithmetic dynamics, endomorphisms of elliptic curves}
\title[]
{Functional graphs of rational maps induced by endomorphisms of ordinary  elliptic curves over finite fields}

\author{S.~Ugolini}
\email{sugolini@gmail.com}

\begin{abstract}
In this paper we study the dynamics of rational maps induced by endomorphisms of ordinary elliptic curves defined over finite fields.  
\end{abstract}

\maketitle
\section{Introduction}
The study of the dynamics of polynomial and rational maps over finite fields has attracted the attention of many investigators. Motivated also by Pollard's rho algorithm, Rogers \cite{rog} and  Vasiga and Shallit \cite{vas} dealt with certain quadratic maps. Later other studies on monomial, polynomial and rational maps  appeared (see for example \cite{chsh, coetal, gas, paqu, shasu}).

In two different articles \cite{SU2, SU35} we studied the dynamics of the maps $x \mapsto x+x^{-1}$ over finite fields of characteristic $2$, $3$ and $5$. Later \cite{SUk} we extended such results to certain maps of the form $x \mapsto k \cdot (x+x^{-1})$. In all these cases the maps studied  are related to certain endomorphisms of  ordinary elliptic curves defined over finite fields and having complex multiplication.  

The idea behind the current work is to generalize the studies of our three aforementioned papers in an unified frame. More precisely, we aim at studying the functional graphs of rational maps induced by endomorphisms of ordinary elliptic curves defined over finite fields. 

Consider an ordinary elliptic curve $E$ defined over the field $\F_q$ with $q$ elements, where $q$ is a power of a prime $p$, and an endomorphism $\alpha$ of $E$ defined as
\begin{equation}\label{alpha}
\alpha(x,y) := (\alpha_1(x), y \cdot \alpha_2 (x))
\end{equation}
for some rational functions $\alpha_1$ and $\alpha_2$ in $\F_q (x)$, where $\alpha_1 (x) = \frac{a(x)}{b(x)}$ for some polynomials $a(x)$ and $b(x)$ in $\F_q [x]$.

Let $n$ be a positive integer and consider the map $r$ defined over  $\Pro(\F_{q^n}) := \F_{q^n} \cup \{ \infty \}$ as 
\begin{displaymath}
r(x) :=
\begin{cases}
\infty & \text{if $x= \infty$ or $b(x) = 0$;}\\
\alpha_1(x) & \text{otherwise.}
\end{cases}
\end{displaymath}

We want to study the properties of the digraph $G^{q^n}$ whose vertices are the elements of $\Pro(\F_{q^n})$ and where an arrow connects two elements $x_1$ and $x_2$ provided that $x_2 = r(x_1)$. 

A detailed description of the structure of $G^{q^n}$ is given in Section \ref{str}. Very briefly, $G^{q^n}$ is formed by a finite number of connected components. Each component consists of a cycle, whose vertices are roots of reversed trees. We say that an element $\tilde{x} \in \Pro(\F_{q^n})$ is $r$-periodic if $r^m (\tilde{x}) = \tilde{x}$ for some positive integer $m$ (here $r^m$ denotes the $m$-fold composition of $r$ with itself).

\subsection{General settings of the paper}
In the paper $E$ is an ordinary elliptic curve defined by an equation of the form
\begin{equation}\label{equa_ell}	
y^2 + a_1 x y + a_3 y = x^3 + a_2 x^2 + a_4 x + a_6
\end{equation}
for some coefficients $a_i \in \F_q$, where $q$ is a power of a prime.

The endomorphism ring $\End(E)$ of $E$ over $\F_q$ is isomorphic to an order in an imaginary quadratic number field $\Q(\sqrt{d})$ for some square-free negative integer $d$ (see \cite{len}). In particular, the unique maximal order in $K:=\Q(\sqrt{d})$ is the ring of integers $\mathcal{O}_K$ of $K$. In this the paper we assume that $\End(E) = \mathcal{O}_K$.

\subsection{Structure of the paper}
The paper is organized as follows.
\begin{itemize}
\item In Section \ref{sec_back} we recall some general facts on the ring $R$ of the endomorphisms of $E$ over $\F_q$ and its quotients with respect to ideals.
\item Section \ref{str} contains the main results of the paper. In Section \ref{cyc_sec} we describe the cycles of the graph $G^{q^n}$, while in Section \ref{tree_sec} we describe the structure of the trees.
\item In Section \ref{exm} we analyse the structure of $G^{q^n}$ choosing three different elliptic curves and endomorphisms.
\end{itemize}

\subsection{Notation} For the sake of brevity, if $i$ and $j$ are two non-negative integers such that $i < j$ we define 
\begin{align*}
[i,j] & := \{x \in \N: i \leq x \leq j \}.
\end{align*}

\section{Background}\label{sec_back}
Let $R$ be the endomorphism ring of $E$ over $\F_q$. According to the assumptions made in the introduction $R = \Z [\omega_d]$, where  

\begin{displaymath}
\omega_{d} := 
\begin{cases}
\frac{1+\sqrt{d}}{2} & \text{if $d \equiv 1 \pmod{4}$,}\\
\sqrt{d} & \text{otherwise.}
\end{cases}
\end{displaymath}

For any ideal $I$ in $R$ we denote by $N(I)$ the absolute norm of $I$, namely 
\begin{equation*}
N(I) := |R/I|.
\end{equation*}

Let $m := |E(\F_{q})|$ and $d':= (q+1-m)^2-4q$. 

As proved by Wittmann \cite{wit}, $d' < 0$ and the $q$-Frobenius endomorphism $\pi_q$ which takes an element $(x,y) \in \overline{\F}_q^2$ to $(x^q, y^q)$ is represented in $R$ by 

\begin{equation*}
\pi_q = \frac{q+1-m+ f_0 \sqrt{d}}{2}
\end{equation*} 

for some positive integer $f_0$ such that $f_0^2 d = d'$.

\subsection{Dedekind domains and ideal factorization}\label{dede_sec}
Since $R$ is a Dedekind domain, any ideal $I$ in $R$ factors into prime ideals and $R/I$ is isomorphic to
\begin{equation}\label{dede_fact}
\left( \prod_{i=1}^{j} R / \B_i^{e_i} \right) \times \left( \prod_{i=j+1}^{k} R / \B_i^{e_i} \right) \times \left(\prod_{i=k+1}^{l} R / \B_i^{e_i}  \right), 
\end{equation}
where
\begin{itemize}
\item $j, k$ and $l$ are non-negative integers;
\item any $e_i$ is a positive integer for $i \in [1,l]$;
\item $\B_i$ is a prime ideal above a splitting rational prime $p_i$ for any $i \in [1,j]$;
\item $\B_i$ is a prime ideal above an inert rational prime $p_i$ for any $i \in [j+1,k]$;
\item $\B_{i}$ is a prime ideal above a ramified rational prime $p_{i}$ for any $i \in [k+1,l]$.
\end{itemize}
Moreover we adopt the following conventions for the integers $j, k$ and $l$:
\begin{itemize}
\item if no prime ideal above a splitting (resp. inert, ramified) rational prime appears in the factorization of $I$, then $j := 0$ (resp. $k := j$, $l:=k$);
\item if $k \not = j$ (resp. $l \not = k$), then $k > j$ (resp. $l > k$ and $l > j$).
\end{itemize}

\subsection{Galois rings}
The main source for the definitions and the results presented in this section is \cite{wan}.
  
A Galois ring is defined to be a finite ring with identity $1$ such that the set of its zero divisors with $0$ added forms a principal ideal $(p1)$ for some prime number $p$ (\cite[Definition 14.1]{wan}).

In the following $p$ is a rational prime, while $s$ and $m$ are two positive integers. Moreover, for any positive integer $a$, we denote the ring $\Z / p^a \Z$ by $\Z_{p^a}$.

Let $h$ be a monic polynomial in $\Z_{p^s} [x]$. We say that $h$ is basic irreducible (resp. basic primitive) in $\Z_{p^s} [x]$ if the polynomial $\overline{h}$ obtained reducing the coefficients of $h$ modulo $p$ is irreducible (resp. primitive) in $\Z_p [x]$. 

If $h$ is a monic basic irreducible polynomial of positive degree $m$ over $\Z_{p^s}$, then $\Z_{p^s} [x] / (h(x))$ is a Galois ring of characteristic $p^s$ and cardinality $p^{sm}$ (\cite[Example 14.2]{wan}), which we denote by $GR(p^s, p^{sm})$. Indeed, as explained in \cite[Section 2]{wan}, any Galois ring has characteristic a power of a prime and the following holds.

\begin{theorem}
Let $G$ be a Galois ring of characteristic $p^s$ and cardinality $p^{sm}$. Then $G$ is isomorphic to the ring $\Z_{p^s} [x] / (h(x))$ for any monic basic irreducible polynomial $h(x)$ of degree $m$ over $\Z_{p^s}$.
\end{theorem}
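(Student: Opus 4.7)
\medskip

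\noindent\textbf{Proof proposal.}
The plan is to build an explicit isomorphism $\Z_{p^s}[x]/(h(x)) \to G$ by lifting a root of $\overline{h}$ from the residue field of $G$ to $G$ itself. First I would extract the ``local'' structure of $G$: by the defining property of a Galois ring, the set of zero divisors of $G$ together with $0$ equals the principal ideal $(p \cdot 1)$, so $pG$ is the unique maximal ideal of $G$ and $G/pG$ is a finite field. Comparing cardinalities, $|G/pG| = p^m$, hence $G/pG \cong \F_{p^m}$. Also, since $G$ has characteristic $p^s$, the subring generated by $1$ is a copy of $\Z_{p^s}$ sitting inside $G$.

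Next, starting from a monic basic irreducible polynomial $h(x) \in \Z_{p^s}[x]$ of degree $m$, its reduction $\overline{h}(x) \in \Z_p[x]$ is irreducible of degree $m$, so $\overline{h}$ has a root $\overline{\alpha}$ in $\F_{p^m} \cong G/pG$. The key step is to lift $\overline{\alpha}$ to an actual root $\alpha \in G$ of $h$. This is a Hensel-lifting argument: $G$ is a finite (hence complete) local ring with maximal ideal $(p)$, $\overline{h}$ is separable over $\F_{p^m}$ because it is irreducible there, and the usual Newton iteration
\begin{equation*}
\alpha_{n+1} := \alpha_n - h(\alpha_n)\,h'(\alpha_n)^{-1}
\end{equation*}
terminates after $s$ steps and produces the desired $\alpha$. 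I expect this Hensel lifting to be the main technical obstacle, since one has to check that $h'(\alpha_n)$ is a unit at each step (which follows because $\overline{h'}(\overline{\alpha}) \neq 0$, hence $h'(\alpha_n) \notin pG$).

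With $\alpha \in G$ satisfying $h(\alpha) = 0$ in hand, I would define the ring homomorphism
\begin{equation*}
\phi \colon \Z_{p^s}[x]/(h(x)) \longrightarrow G, \qquad \overline{f}(x) \longmapsto f(\alpha),
\end{equation*}
which is well defined because $h(\alpha) = 0$. Surjectivity follows from Nakayama's lemma applied to the local ring $G$: modulo $pG$ the image contains $\F_p[\overline{\alpha}] = \F_{p^m} = G/pG$, hence $\Imm \phi + pG = G$, so $\Imm \phi = G$. Finally, both source and target have cardinality $p^{sm}$ (for $\Z_{p^s}[x]/(h(x))$ this is because $h$ is monic of degree $m$ over a ring of size $p^s$), so $\phi$ is also injective, and thus an isomorphism. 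Uniqueness of $G$ up to isomorphism is then just the observation that at least one such polynomial $h$ exists (e.g.\ any lift to $\Z_{p^s}[x]$ of a monic irreducible polynomial of degree $m$ over $\F_p$).
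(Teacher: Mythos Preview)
Your argument is correct and follows the standard Hensel-lifting route to the structure theorem for Galois rings. One minor point: the claim $|G/pG|=p^m$ does not follow merely from ``comparing cardinalities''; you need the short filtration argument that each $p^iG/p^{i+1}G$ is a one-dimensional vector space over the residue field $G/pG$ (since $p^iG$ is cyclic as a $G$-module and $pG$ acts trivially on the quotient), whence $|G|=|G/pG|^s$. Once that is made explicit, your proof is complete.

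As for comparison with the paper: there is nothing to compare. The theorem is stated in the background section and attributed to \cite{wan}; the paper gives no proof of its own and simply imports the result. So your proposal is not competing with an argument in the paper but rather supplying one where the paper chose to cite. For what it is worth, the proof in Wan's book proceeds along essentially the same lines you sketch (local structure, Hensel lift of a root, evaluation homomorphism, cardinality count), so you have reconstructed the expected argument.
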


The following theorem is based on \cite[Theorem 14.8]{wan} and \cite[Corollary 14.9]{wan}.

\begin{theorem}\label{galo_theo}
\mbox{}

\begin{enumerate}
\item In the Galois ring $GR(p^s, p^{sm})$ there exists a nonzero element $\xi$of order $p^m-1$, which is a root of a monic basic primitive polynomial $h(x)$ of degree $m$ over $\Z_{p^s}$ and dividing $x^{p^m-1} - 1$ in $\Z_{p^s} [x]$.
\item Any element $c \in GR(p^s, p^{sm})$ can be written uniquely as 
\begin{equation*}
c = c_0 + c_1 p + \dots + c_{s-1} p^{s-1},
\end{equation*}
where $c_0, c_1, \dots, c_{s-1} \in \Gamma := \{0, 1, \xi, \xi^2, \dots, \xi^{p^m-2} \}$.
\item All the elements $c$ with $c_0=0$ are either zero divisors or zero, and they form the ideal $(p)$ of $GR(p^s, p^{sm})$.
\item Every nonzero element $c \in GR(p^s, p^{sm})$ can be expressed uniquely in the form $c = u p^r$, where $u$ is a unit and $0 \leq r < s$.
\end{enumerate}
\end{theorem}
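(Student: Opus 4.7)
My plan is to prove the four parts in order; part~(1) produces a Teichm\"uller-like set $\Gamma$ of representatives, part~(2) then establishes the canonical $p$-adic expansion against $\Gamma$, and parts~(3) and~(4) follow as short consequences.

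For part~(1), I would apply Hensel's lemma to the polynomial $x^{p^m-1}-1$. Reduced modulo $p$, this polynomial is separable over $\F_p = \Z_{p^s}/(p)$, since its derivative $(p^m-1)x^{p^m-2}$ is coprime to it, so it factors into distinct monic irreducibles in $\F_p[x]$. Its splitting field over $\F_p$ is $\F_{p^m}$, hence at least one irreducible factor $\bar h(x)$ has degree $m$ and is primitive---it is the minimal polynomial of a generator of $\F_{p^m}^{\times}$. Hensel's lemma lifts $\bar h$ to a monic factor $h(x) \in \Z_{p^s}[x]$ of degree $m$ dividing $x^{p^m-1}-1$, and any root $\xi$ of $h$ in $GR(p^s, p^{sm})$ satisfies $\xi^{p^m-1}=1$. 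Since $\bar\xi$ generates $\F_{p^m}^{\times}$, the order of $\xi$ is a multiple of $p^m-1$; together with $\ord(\xi) \mid p^m-1$ this forces $\ord(\xi)=p^m-1$, and $h$ is therefore basic primitive.

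For part~(2), the reduction map $\pi \colon GR(p^s,p^{sm}) \to GR(p^s,p^{sm})/(p) \cong \F_{p^m}$ has kernel $(p)$, and by part~(1) the set $\Gamma$ maps bijectively onto $\F_{p^m}$. Given $c$, I would choose $c_0 \in \Gamma$ with $\pi(c)=\pi(c_0)$; then $c - c_0 \in (p)$, so $c - c_0 = p d_1$ for some $d_1$, and iterating the procedure on $d_1$ while using $p^s = 0$ yields an expansion $c = c_0 + c_1 p + \cdots + c_{s-1} p^{s-1}$. Uniqueness follows from a counting argument: the expansion map $\Gamma^s \to GR(p^s,p^{sm})$ is a surjection between sets of cardinality $p^{sm}$, hence a bijection.

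Parts~(3) and~(4) are then short. For~(3), the ideal $(p)$ coincides with the zero-divisors plus zero by the defining property of a Galois ring, and the uniqueness in part~(2) shows $c \in (p)$ precisely when $c_0 = 0$. For~(4), given $c \neq 0$ with expansion from~(2), let $r$ be the least index with $c_r \neq 0$ and set $u = c_r + c_{r+1} p + \cdots + c_{s-1} p^{s-1-r}$, so $c = p^r u$; since $\pi(u) = \bar c_r \neq 0$, the element $u$ lies outside $(p)$ and hence is a unit by~(3). The value of $r$ is uniquely determined as the smallest index of a nonzero coefficient of $c$ in the expansion of~(2). The main obstacle I foresee is a clean Hensel-lifting argument in part~(1), together with the verification that the lifted factor is basic primitive and that $\xi$ has order exactly $p^m-1$; once $\Gamma$ is fixed, the remaining work reduces to bookkeeping, with the only delicate point being the repeated use of the structural hypothesis that every zero-divisor of $GR(p^s,p^{sm})$ lies in the principal ideal $(p)$.
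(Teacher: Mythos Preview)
The paper does not give its own proof of this theorem; it is stated without argument as a transcription of Theorem~14.8 and Corollary~14.9 from Wan's book \cite{wan}. There is therefore nothing in the paper itself to compare your proposal against.

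That said, your argument is the standard textbook route and is essentially what one finds in Wan's reference: Hensel-lift a primitive irreducible factor of $x^{p^m-1}-1$ from $\F_p[x]$ to $\Z_{p^s}[x]$ to produce $\xi$ and the Teichm\"uller set $\Gamma$, then obtain the $p$-adic expansion by successive approximation with uniqueness by a cardinality count, and read off parts~(3) and~(4) from the expansion. The reasoning is correct. One small caveat on part~(4): the unit $u$ is in fact not uniquely determined by $c$ (for instance $2 = 1\cdot 2 = 3\cdot 2$ in $\Z_4$), so the word ``uniquely'' in the statement should be read as applying to the exponent $r$ alone---which is precisely what you establish.
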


\begin{remark} We notice in passing that $\Z_{p^s}$ is a Galois ring on its own. Indeed $\Z_{p^s} \cong GR (p^s, p^s)$.
\end{remark} 

\subsection{Structure of the quotient rings $R / \B_i^{e_i}$}
The main references for this section are \cite{eir, eir2}. Still, the notation for Galois rings used in \cite{eir, eir2} is slightly different from the notation adopted in this paper, which is the same as in \cite{wan}.

We distinguish three cases.

\textbf{Case 1: $i \in [1,j]$.}
According to \cite[Theorem 5]{eir}, there is an isomorphism
\begin{equation*}
R / \B_i^{e_i} \cong \Z_{p_i^{e_i}} = GR (p_i^{e_i}, p_i^{e_i}).
\end{equation*}

\textbf{Case 2: $i \in [j+1,k]$.}
According to \cite[Theorem 6]{eir} there is an isomorphism
\begin{equation*}
R / \B_i^{e_i} \cong GR (p_i^{e_i}, p_i^{2 e_i}).
\end{equation*}

\textbf{Case 3: $i \in [k+1,l]$.} Let $x+c$ be a factor in $\Q (\sqrt{d})[x]$ of the minimal polynomial of $d$ over $\Q$. The following hold in accordance with \cite[Theorem 1]{eir2}. 
\begin{itemize}
\item If $e_i = 1$, then the additive group of $R / \B_{i}^{e_{i}}$ is isomorphic to a cyclic group of order $p_i$.
\item If $e_{i}$ is even, then the additive group of $R / \B_{i}^{e_{i}}$ is isomorphic to the direct sum of two cyclic groups and its elements can be represented as
\begin{equation*}
z_0 + z_1 (d+c),
\end{equation*}
where $(d+c)$ is a principal ideal in  $\Z [\omega_d]$ and $z_0, z_1 \in \Z_{p_i^{e_{i}/2}}$.  
\item If $e_{i}$ is odd, then the additive group of $R / \B_{i}^{e_{i}}$ is isomorphic to the direct sum of two cyclic groups and its elements can be represented as
\begin{equation*}
x_0 + z_0 (d+c),
\end{equation*}
where $x_0 \in  \Z_{p_i^{(e_{i}+1)/2}}$ and $z_0 \in \Z_{p_i^{(e_{i}-1)/2}}$.  
\end{itemize}

\section{Structure of the graphs}\label{str}
According to \cite[Theorem 1]{len} there is an isomorphism   
\begin{equation*}
E(\F_{q^n}) \cong R / (\pi_q^n-1) R
\end{equation*}
of $R$-modules.

The following trivially holds because the equation (\ref{equa_ell}) has degree two.
\begin{lemma}
Let $\tilde{x} \in \F_{q^n}$. Then there exist two different or coincident points in $E(\F_{q^{2n}})$ having such an $x$-coordinate. 
\end{lemma}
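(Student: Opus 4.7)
The plan is to treat the defining Weierstrass equation (\ref{equa_ell}) as a quadratic polynomial in the variable $y$ once $x$ is specialized to $\tilde{x} \in \F_{q^n}$, and then invoke the fact that $\F_{q^{2n}}$ is the unique quadratic extension of $\F_{q^n}$.

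More concretely, I would first substitute $x = \tilde{x}$ into (\ref{equa_ell}) to obtain the polynomial
\begin{equation*}
f(y) := y^2 + (a_1 \tilde{x} + a_3) y - (\tilde{x}^3 + a_2 \tilde{x}^2 + a_4 \tilde{x} + a_6) \in \F_{q^n}[y],
\end{equation*}
whose roots are precisely the $y$-coordinates that complete $\tilde{x}$ to a point on $E$. Since $f(y)$ is monic of degree $2$, it factors over its splitting field as $f(y) = (y - y_1)(y - y_2)$ with $y_1, y_2$ either equal (in which case the point $(\tilde{x}, y_1)$ is counted with multiplicity two) or distinct.

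Next I would argue that $y_1, y_2 \in \F_{q^{2n}}$. If $f$ is reducible over $\F_{q^n}$, both roots already lie in $\F_{q^n} \subseteq \F_{q^{2n}}$. Otherwise $f$ is irreducible of degree $2$ over $\F_{q^n}$, so its roots generate the unique degree-$2$ extension of $\F_{q^n}$, which is $\F_{q^{2n}}$. In either case the two (possibly coincident) points $(\tilde{x}, y_1)$ and $(\tilde{x}, y_2)$ belong to $E(\F_{q^{2n}})$, yielding the claim.

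There is no real obstacle here: the result is purely a restatement of the fact that the hyperelliptic involution on $E$ has fibres of cardinality at most $2$, combined with the Galois-theoretic observation about quadratic extensions of finite fields. The only point to be slightly careful about is the case of a double root (which occurs when the discriminant $(a_1 \tilde{x} + a_3)^2 + 4(\tilde{x}^3 + a_2 \tilde{x}^2 + a_4 \tilde{x} + a_6)$ vanishes), but this is precisely the situation covered by the phrase ``two different or coincident points'' in the statement.
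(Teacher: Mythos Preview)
Your proposal is correct and is exactly the argument the paper has in mind: the paper does not give a written-out proof at all, merely remarking that the lemma ``trivially holds because the equation (\ref{equa_ell}) has degree two''. Your write-up is simply an explicit unpacking of that one-line justification, so there is nothing to compare.
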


Consider the sets
\begin{align*}
E_0 & := \{x \in \F_{q^n}: (x,0) \in E(\F_{q^n}) \} \cup \{ \infty \},\\
E_0 (\F_{q^n}) & := \{(x,0) : x \in \F_{q^n} \} \cup \{ O \},
\end{align*}
where $O$ is the point at infinity of $E$.

In analogy with \cite{SUk} we introduce the sets

\begin{eqnarray*}
A_n & := & \{x \in \F_{q^n} : (x,y) \in E(\F_{q^n}) \text{ for some $y \in \F_{q^n}$} \} \cup \{\infty \},\\
B_n & := & \{x \in \F_{q^n} : (x,y) \in E(\F_{q^{2n}}) \text{ for some $y \in \F_{q^{2n}} \backslash \F_{q^n}$} \} \cup E_0,
\end{eqnarray*}
and correspondingly the set

\begin{eqnarray*}
E(\F_{q^{2n}})_{B_n} & := & \{(x,y) \in E(\F_{q^{2n}}) : x \in B_n \}.
\end{eqnarray*}

We notice that 
\begin{equation*}
A_n \cap B_n = E_0
\end{equation*}
and the following holds in analogy with \cite[Lemma 3.3]{SUk}. 

\begin{lemma}\label{lem_iff}
Let $P$ be a rational point of $E(\F_{q^{2n}})$ such that either $P = O$ or $P = (\tilde{x}, \tilde{y})$ for some $\tilde{x} \in \F_{q^n}$ and $\tilde{y} \in \F_{q^{2n}}$. Then
\begin{eqnarray*}
(\pi_q^n-1) P = O & \Leftrightarrow  & P \in E(\F_{q^n});\\
(\pi_q^n+1) P = O & \Leftrightarrow  & P \in E(\F_{q^{2n}})_{B_n}.
\end{eqnarray*}
\end{lemma}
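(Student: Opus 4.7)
The plan is to apply $\pi_q^n$ explicitly in coordinates and exploit that it fixes $\F_{q^n}$ pointwise. The case $P = O$ is immediate for both biconditionals, adopting the convention that $\infty \in E_0 \subseteq B_n$. Otherwise, for $P = (\tilde{x}, \tilde{y})$ with $\tilde{x} \in \F_{q^n}$ and $\tilde{y} \in \F_{q^{2n}}$, one has $\pi_q^n P = (\tilde{x}^{q^n}, \tilde{y}^{q^n}) = (\tilde{x}, \tilde{y}^{q^n})$. The first biconditional is then essentially tautological: $(\pi_q^n - 1)P = O$ iff $\pi_q^n P = P$ iff $\tilde{y}^{q^n} = \tilde{y}$ iff $\tilde{y} \in \F_{q^n}$ iff $P \in E(\F_{q^n})$.

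For the second biconditional, $(\pi_q^n + 1)P = O$ is equivalent to $\pi_q^n P = -P$, i.e., $\tilde{y}^{q^n} = -\tilde{y} - a_1 \tilde{x} - a_3$, the other root above $\tilde{x}$ of the monic quadratic
\begin{equation*}
f_{\tilde{x}}(y) = y^2 + (a_1 \tilde{x} + a_3)y - (\tilde{x}^3 + a_2 \tilde{x}^2 + a_4 \tilde{x} + a_6) \in \F_{q^n}[y].
\end{equation*}
Split into two cases. If $\tilde{y} \in \F_{q^{2n}} \setminus \F_{q^n}$, then the minimal polynomial of $\tilde{y}$ over $\F_{q^n}$ has degree $2$, is monic, and divides $f_{\tilde{x}}$, so the two polynomials coincide; this yields $\tilde{y}^{q^n} = -\tilde{y} - a_1 \tilde{x} - a_3$ for free, and simultaneously places $\tilde{x}$ in the first part of $B_n$. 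If instead $\tilde{y} \in \F_{q^n}$, the required identity collapses to $2\tilde{y} + a_1\tilde{x} + a_3 = 0$, i.e., $P = -P$: $P$ is a $2$-torsion point in $E(\F_{q^n})$, which is the situation in which $\tilde{x}$ should appear as an element of $E_0 \subseteq B_n$. The converse direction—$\tilde{x} \in B_n$ implies $(\pi_q^n + 1)P = O$—is handled by the same dichotomy, reading the definition of $B_n$ backwards.

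The only step that is not purely formal is the matching in the second case between $2$-torsion points of $E(\F_{q^n})$ and $x$-coordinates in $E_0$; this is where the specific shape of the Weierstrass model intervenes, and it is cleanest after reducing (as is standard for ordinary curves, outside characteristic $2$ and $3$) to a short Weierstrass form in which $y=0$ characterizes exactly the $2$-torsion fibre. Once this identification is fixed, the two cases exhaust all possibilities and the lemma follows by combining them with the trivial case $P = O$.
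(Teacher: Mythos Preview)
Your argument is correct. The paper itself does not prove this lemma; it only remarks that the result holds ``in analogy with \cite[Lemma 3.3]{SUk}'', so there is no in-paper proof to compare against. What you have written is the natural direct verification: compute $\pi_q^n$ in coordinates, use that the $x$-coordinate is fixed, and for the second equivalence identify the Galois conjugate $\tilde{y}^{q^n}$ with the second root $-\tilde{y} - a_1\tilde{x} - a_3$ of $f_{\tilde{x}}$ via Vieta.

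Your caveat about $E_0$ is apt and in fact exposes a tension in the paper's own setup: although a long Weierstrass equation is displayed, the definition of $E_0$ via $y=0$, the shape $\alpha(x,y)=(\alpha_1(x),\,y\cdot\alpha_2(x))$ of the endomorphism, and all the worked examples tacitly assume $a_1=a_3=0$. Under that assumption your second case closes cleanly (if $\tilde{y}\in\F_{q^n}$ and $\tilde{x}\in E_0$ then $f_{\tilde{x}}(y)=y^2$, forcing $\tilde{y}=0$ and $P=-P$), and the proof is complete.
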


In virtue of Lemma \ref{lem_iff} we can say that there is an isomorphism
\begin{equation*}
E(\F_{q^{2n}})_{B_n} \cong R/(\pi_q^n+1) R.
\end{equation*}

Let $\alpha$ and $r$ be defined as in the Introduction. Along the way we denote by $\alpha$ also its representation as an element of $R$.

The following holds in analogy with \cite[Lemma 4.4]{SUk}.

\begin{lemma}\label{lem_a_n_b_n}
If $\tilde{x} \in A_n$ (resp. $\tilde{x} \in B_n$), then $r(\tilde{x}) \in A_n$ (resp. $r(\tilde{x}) \in B_n$).
\end{lemma}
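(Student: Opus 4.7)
The plan is to exploit two facts: (i) $\alpha$ is an endomorphism of $E$ defined over $\F_q$, so it maps $E(\F_{q^n})$ into itself and commutes with $\pi_q$, hence with $\pi_q^n\pm 1$; (ii) Lemma \ref{lem_iff} characterizes the sets $A_n$ and $B_n$ via the kernels of $\pi_q^n-1$ and $\pi_q^n+1$ respectively (restricted to points whose $x$-coordinate lies in $\F_{q^n}$ or equal to $O$).

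For the $A_n$ part, given $\tilde{x}\in A_n$, either $\tilde{x}=\infty$ and then $r(\tilde{x})=\infty\in A_n$, or we pick a point $P=(\tilde{x},\tilde{y})\in E(\F_{q^n})$. Since $\alpha$ is an endomorphism over $\F_q$, $\alpha(P)\in E(\F_{q^n})$. If $\alpha(P)=O$, i.e.\ $b(\tilde{x})=0$, then $r(\tilde{x})=\infty\in A_n$; otherwise $\alpha(P)=(\alpha_1(\tilde{x}),\tilde{y}\,\alpha_2(\tilde{x}))$, whose $x$-coordinate $\alpha_1(\tilde{x})=r(\tilde{x})$ is the $x$-coordinate of a point in $E(\F_{q^n})$, hence in $A_n$.

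For the $B_n$ part, the core idea is that $\alpha$ preserves the kernel of $\pi_q^n+1$. Given $\tilde{x}\in B_n$ I would pick $P\in E(\F_{q^{2n}})_{B_n}$ with $x$-coordinate $\tilde{x}$ (taking $P=O$ if $\tilde{x}=\infty$, a point with $y=0$ if $\tilde{x}\in E_0\setminus\{\infty\}$, and otherwise a point with $y\in\F_{q^{2n}}\setminus\F_{q^n}$). By Lemma \ref{lem_iff}, $(\pi_q^n+1)P=O$. Since $\alpha$ is defined over $\F_q$ it commutes with $\pi_q$, so
\begin{equation*}
(\pi_q^n+1)\alpha(P)=\alpha\bigl((\pi_q^n+1)P\bigr)=\alpha(O)=O.
\end{equation*}
If $\alpha(P)=O$ then $r(\tilde{x})=\infty\in E_0\subseteq B_n$; otherwise $\alpha(P)=(\alpha_1(\tilde{x}),\tilde{y}\,\alpha_2(\tilde{x}))$, whose $x$-coordinate lies in $\F_{q^n}$ because $\alpha_1\in\F_q(x)$ and $\tilde{x}\in\F_{q^n}$. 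The hypotheses of Lemma \ref{lem_iff} are then satisfied for $\alpha(P)$, and the lemma gives $\alpha(P)\in E(\F_{q^{2n}})_{B_n}$, whence $r(\tilde{x})=\alpha_1(\tilde{x})\in B_n$.

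The only subtle point is making sure that the hypothesis of Lemma \ref{lem_iff} (the point being $O$ or having its $x$-coordinate in $\F_{q^n}$) is preserved by $\alpha$, which is what lets us apply the converse direction of the lemma after translating the problem into the kernel condition. This is however immediate from $\alpha_1\in\F_q(x)$. No case analysis on splitting behaviour of primes or on the structure of $R/(\pi_q^n\pm 1)R$ is needed; the argument is formal once $\alpha$ is viewed as an element of $R$ commuting with $\pi_q$.
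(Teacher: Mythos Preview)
Your argument is correct. The paper itself does not supply a proof of this lemma; it simply states the result ``in analogy with \cite[Lemma~4.4]{SUk}'' and moves on. Your approach---observing that $\alpha$, being defined over $\F_q$, commutes with $\pi_q$ and therefore preserves $\ker(\pi_q^n\pm 1)$, and then invoking Lemma~\ref{lem_iff} (together with the fact that $\alpha_1\in\F_q(x)$ keeps the $x$-coordinate inside $\F_{q^n}$) to translate back to membership in $E(\F_{q^{2n}})_{B_n}$, with the $A_n$ case reducing to the trivial statement that an $\F_q$-endomorphism sends $E(\F_{q^n})$ to itself---is exactly the natural one and is presumably what underlies the cited reference.
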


\begin{remark}
While we can conveniently study the action of the map $r$ separately on the elements of $A_n$ and $B_n$, we should  take care of the fact that $A_n \cap B_n = E_0$.
\end{remark}

The following holds for the action of $r$ on $E_0$.

\begin{lemma}\label{lem_e_0}
If $\tilde{x} \in E_0$, then $r(\tilde{x}) \in E_0$.
\end{lemma}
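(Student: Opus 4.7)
The plan is to case-split on the three possibilities coming from the definitions of $r$ and $E_0$ and dispatch each one directly. First, if $\tilde{x} = \infty$, then $r(\tilde{x}) = \infty \in E_0$ by definition. Second, if $\tilde{x} \in \F_{q^n}$ lies in $E_0$ with $b(\tilde{x}) = 0$, then once again $r(\tilde{x}) = \infty \in E_0$ by the first branch of the definition of $r$.

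The only substantive case is when $\tilde{x} \in \F_{q^n}$ satisfies $P := (\tilde{x}, 0) \in E(\F_{q^n})$ and $b(\tilde{x}) \neq 0$, so that $r(\tilde{x}) = \alpha_1(\tilde{x}) \in \F_{q^n}$. The strategy is to apply the endomorphism $\alpha$ directly to $P$ and read off its coordinates from \eqref{alpha}. Since $\alpha$ is defined over $\F_q$, the image $\alpha(P)$ is a point of $E(\F_{q^n})$, and because $b(\tilde{x}) \neq 0$ it is a finite affine point whose $x$-coordinate equals $\alpha_1(\tilde{x})$. Its $y$-coordinate is $0 \cdot \alpha_2(\tilde{x}) = 0$, whence $(\alpha_1(\tilde{x}), 0) \in E(\F_{q^n})$ and therefore $r(\tilde{x}) = \alpha_1(\tilde{x}) \in E_0$.

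The only subtle point worth addressing is that if $\alpha_2$ happened to have a pole at $\tilde{x}$, the product $0 \cdot \alpha_2(\tilde{x})$ would be formally indeterminate; this is the main obstacle. One rules it out by observing that $\alpha(P)$ has already been identified as a finite point of $E$ with $x$-coordinate $\alpha_1(\tilde{x})$, so the rational function $y \cdot \alpha_2(x)$ on $E$ must be regular at the smooth point $P$. Since $y$ has only a simple zero at $P$ in a local uniformizer, $\alpha_2$ can have no pole at $\tilde{x}$, and the evaluation $0 \cdot \alpha_2(\tilde{x}) = 0$ is unambiguous, completing the argument.
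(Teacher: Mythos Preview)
Your proof is correct and follows essentially the same approach as the paper: the paper's argument is the one-line observation that $\alpha(\tilde{x},0) = (\alpha_1(\tilde{x}), 0)$ together with $r(\infty) = \infty$, and you unpack exactly this into an explicit case analysis. Your added discussion of the $b(\tilde{x})=0$ case and of the possible pole of $\alpha_2$ makes the argument more careful than the paper's, but does not change the underlying idea.
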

\begin{proof}
The result follows from the fact that 
\begin{equation*}
\alpha(\tilde{x},0) = (\alpha_1(\tilde{x}), 0)
\end{equation*}
and $r(\infty) = \infty$.
\end{proof}

We set $S:=A_n$ or $S:=B_n$ and correspondingly $\delta:=\pi_q^n-1$ or $\delta:=\pi_q^n+1$.
  
The ring  $R / (\delta)$ is isomorphic to
\begin{equation*}
R / I_c \times R / I_t, 
\end{equation*}
where $I_c$ and $I_t$ are two ideals such that $(\alpha) \not\subseteq \B_c$ for any prime factor $\B_c$ of $I_c$, while $(\alpha) \subseteq \B_t$ for any prime factor $\B_t$ of $I_t$.

\subsection{Cycles of $S$} \label{cyc_sec}
If $\tilde{x}$ is $r$-periodic in $S$, then $\tilde{x}$ is the $x$-coordinate of a point 
\begin{equation*}
(P, [0]) \in R / I_c \times R / I_t.
\end{equation*} 
Suppose that 
\begin{equation}\label{I_c_fac}
R / I_c \cong \left( \prod_{i=1}^{j} R / \B_i^{e_i} \right) \times \left( \prod_{i=j+1}^{k} R / \B_i^{e_i} \right) \times \left(\prod_{i=k+1}^{l} R / \B_i^{e_i}  \right),
\end{equation}
where the notations are as in Section \ref{dede_sec}.
The description of the cycles formed by the elements of $S$ is similar to \cite[Section 3.1]{SU35}. 

For any $i \in [1,l]$ we define the  interval $H_i$ as follows: 
\begin{itemize}
\item $H_i := [0,e_i]$ if $i \in [1,k]$;
\item $H_i := [0, \lfloor (e_i+1)/2 \rfloor]$ if $i \in [k+1,l]$.
\end{itemize}

Moreover we define 
\begin{equation*}
H := \prod_{i=1}^{l} H_{i}. 
\end{equation*}

Let $h_i$ be an integer in $H_i$ for some $i \in [1,l]$.

We introduce the notation $n_{h_i}$  distinguishing three cases.
\begin{enumerate}
\item If $i \in [1, j]$, then
\begin{displaymath}
n_{h_i} :=
\begin{cases}
1 & \text{if $h_i=0$,}\\
p_i^{h_i} - p_i^{h_i-1} & \text{otherwise.}
\end{cases}
\end{displaymath}
\item If $i \in [j+1, k]$, or $i \in [k+1,l]$ and $e_{i}$ is even, then
\begin{displaymath}
n_{h_i} :=
\begin{cases}
1 & \text{if $h_i=0$,}\\
{p_i}^{2 h_i} - {p_i}^{2(h_{i}-1)} & \text{otherwise.}
\end{cases}
\end{displaymath}
\item If $i \in [k+1,l]$ and $e_{i}$ is odd, then
\begin{displaymath}
n_{h_{i}} :=
\begin{cases}
1 & \text{if $h_{i}=0$,}\\
{p_{i}}^{2 h_{i}} - {p_{i}}^{2(h_{i}-1)} & \text{if $1 \leq h_{i} \leq (e_{i}-1)/2$,}\\
{p_{i}}^{2 h_{i} - 1} - {p_{i}}^{2(h_{i}-1)} & \text{if $h_{i} = (e_{i}+1)/2$.}
\end{cases}
\end{displaymath} 
\end{enumerate}

Let  
\begin{displaymath}
s_{h_i} := \min \{v \in \N \backslash \{ 0 \} : \alpha^v \pm 1 \in \B_i^{h_i} \},
\end{displaymath}
where we adopt the convention that $\B_i^0 = R$. 

A $r$-periodic element $\tilde{x} \in S \backslash \{\infty \}$ is the $x$-coordinate of a rational point whose image in $R / I_c$ is $P = (P_i)_{i=1}^{l}$.

Any $P_i$ has additive order $p_i^{h_i}$ in $R / \B_i^{e_i}$ for some $h_i \in H_i$. The $l$ integers $h_i$, for $i \in [1,l]$, form the $l$-tuple
\begin{equation*}
h := (h_1, \dots, h_l).
\end{equation*}

The following holds.

\begin{lemma}\label{tech_1}
\mbox{}

\begin{enumerate}
\item Let $i \in [1,l]$. Then in $R / \B_i^{e_i}$ there are $n_{h_i}$ points of additive order $p_i^{h_i}$ for any $h_i \in H_i$.
\item Let $P_i$ be a point of order $p_i^{h_i}$ in $R / \B_i^{e_i}$. Then the smallest of the positive integers $v$ such that $[\alpha]^{v} P_i \in \{ \pm P_i \}$ is
$s_{h_i}$. 
\end{enumerate}
\end{lemma}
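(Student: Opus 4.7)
The plan is to prove both claims by reducing to the three cases---split, inert, ramified---of Section~\ref{dede_sec} and exploiting the explicit additive structure of $R/\B_i^{e_i}$ recalled in Section~\ref{sec_back}.

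For part~(1), I would fix $i$ and count the elements of exact additive order $p_i^{h_i}$ inside the known additive group of $R/\B_i^{e_i}$. In Case~1 this group is cyclic of order $p_i^{e_i}$, so the count is $p_i^{h_i}-p_i^{h_i-1}$ (or $1$ when $h_i=0$). In Case~2 the additive group of $GR(p_i^{e_i},p_i^{2e_i})$ is $\Z_{p_i^{e_i}}\oplus \Z_{p_i^{e_i}}$, yielding $p_i^{2h_i}-p_i^{2(h_i-1)}$ by an elementary subgroup-of-a-product computation. In Case~3 I would split further into $e_i=1$, $e_i$ even, and $e_i$ odd. The only subcase that produces the anomalous formula $p_i^{2h_i-1}-p_i^{2(h_i-1)}$ is $h_i=(e_i+1)/2$ with $e_i$ odd, where the two cyclic summands have unequal orders and the smaller summand cannot reach the extreme order.

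For part~(2), I would start from the equivalence $\alpha^v P_i\in\{\pm P_i\}\iff \alpha^v\mp 1\in \mathrm{Ann}_R(P_i)$. Since $R_{\B_i}$ is a DVR, $R/\B_i^{e_i}$ is a local principal ideal ring whose ideals are precisely the powers of $\B_i$, so the annihilator of a representative $P_i=\gamma+\B_i^{e_i}$ is $\B_i^{e_i-v_{\B_i}(\gamma)}$. In Cases~1 and~2 the prime $\B_i$ is unramified, hence $\B_i^k\cap\Z=p_i^k\Z$, and additive order $p_i^{h_i}$ corresponds to $\B_i$-valuation $e_i-h_i$; thus $\mathrm{Ann}_R(P_i)=\B_i^{h_i}$, and the smallest $v$ with $\alpha^v\mp 1\in\B_i^{h_i}$ equals $s_{h_i}$ by the very definition of $s_{h_i}$.

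The hard part will be the ramified case of part~(2). There $\B_i$ has ramification index $2$ over $p_i$, so $\B_i^k\cap\Z=p_i^{\lceil k/2\rceil}\Z$ and two distinct $\B_i$-valuations of $\gamma$ may produce the same additive order $p_i^{h_i}$; consequently $\mathrm{Ann}_R(P_i)$ can be either $\B_i^{2h_i-1}$ or $\B_i^{2h_i}$. To close the argument I would use the explicit representation of elements of $R/\B_i^{e_i}$ from \cite{eir2} recalled in Section~\ref{sec_back}, together with the fact that $\alpha$ is a unit modulo every prime factor of $I_c$, to verify---separately for $e_i=1$, $e_i$ even, and $e_i$ odd---that the smallest $v$ satisfying $\alpha^v\pm 1\in\B_i^{h_i}$ still coincides with the smallest positive $v$ realizing $\alpha^v P_i\in\{\pm P_i\}$ for every $P_i$ of additive order $p_i^{h_i}$. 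Part~(1) is routine once the subcases are enumerated.
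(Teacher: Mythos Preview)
Your outline is sound and reaches the same conclusions, but the paper's implementation differs in its choice of machinery. For part~(1) in the split and inert cases the paper does not argue via the abstract additive group; instead it writes each element of $GR(p_i^{e_i},p_i^{me_i})$ in the Teichm\"uller $p_i$-adic expansion of Theorem~\ref{galo_theo}(2) and observes that additive order $p_i^{h_i}$ means precisely that the first $e_i-h_i$ digits vanish while $c_{e_i-h_i}\neq 0$, giving $(p_i^m-1)\,p_i^{m(h_i-1)}$ choices. For part~(2) it avoids the DVR/annihilator language entirely and instead applies Theorem~\ref{galo_theo}(4): the image of $\alpha^v\pm 1$ in the Galois ring is either zero or of the form $u\,p_i^{r}$ with $u$ a unit, and the vanishing of $(\alpha^v\pm 1)P_i$ together with the expansion of $P_i$ forces $r\ge h_i$, i.e.\ $\alpha^v\pm 1\in\B_i^{h_i}$. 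Your valuation argument is a cleaner, more conceptual repackaging of exactly this computation. For the ramified case the paper is in fact terser than you: it dispatches both claims with the single remark that ``the result follows in a similar way since $\Z_{p_i^s}\cong GR(p_i^s,p_i^s)$'', whereas you correctly flag that two distinct $\B_i$-valuations can yield the same additive $p_i$-order and propose the natural case analysis via the explicit representations from \cite{eir2}.
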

\begin{proof}
We prove separately the claims.
\begin{enumerate}
\item Let $i \in [1,k]$. A point $P_i$ has additive order $p_i^{h_i}$ greater than $1$ in $R / \B_i^{e_i}$ if and only if its representative in $GR(p_i^{e_i}, p_i^{m e_i})$, where $m \in \{1, 2 \}$, is 
\begin{equation}\label{P_i}
c_{e_i - h_i} p_i^{e_i - h_i} + \dots + c_{e_i - 1} p_i^{e_i-1}
\end{equation}
for some coefficients $c_{e_i - h_i}, \dots, c_{e_i -1} \in \{0, 1, \dots, \xi^{p_i^m-2} \}$ with $c_{e_i - h_i} \not = 0$. Therefore there are 
\begin{equation*}
(p_i^m-1) \cdot p_i^{m (h_i-1)} = p_i^{m h_i} - p_i^{m (h_i-1)}
\end{equation*}
points of additive order $p_i^{h_i}$ in $R/ \B_i^{e_i}$.

If $i \in [k+1,l]$, then the result follows from the fact that $\Z_{p_i^s} \cong GR(p_i^s, p_i^s)$ for any positive integer $s$.

\item Suppose that $i \in [1,k]$. Let $v$ be a positive integer such that $[\alpha]^v P_i = \pm P_i$, namely $[\alpha^v \pm 1] P_i = [0]$ in $R / \B_i^{e_i}$. 

In $GR(p_i^{e_i}, p_i^{m e_i})$, where $m \in \{1, 2 \}$, we can represent $P_i$ in the form (\ref{P_i}). Therefore, either $[\alpha^v \pm 1] = [0]$ or, according to Theorem \ref{galo_theo}(4), $[\alpha^v \pm 1]$ can be written in $GR(p_i^{e_i}, p_i^{m e_i})$ as   
\begin{equation*}
u p_i^{h_i}
\end{equation*}
for some unit $u$. Hence $[\alpha^v \pm 1] = [0]$ in $R / \B^{h_i}$, namely $\alpha^v \pm 1 \in \B_i^{h_i}$.

If $i \in [k+1,l]$, then the result follows in a similar way since $\Z_{p_i^s} \cong GR(p_i^s, p_i^s)$ for any positive integer $s$.
\end{enumerate}
\end{proof}

We define
\begin{displaymath}
s'_h := \lcm (s_{h_1}, \dots, s_{h_{l}}).
\end{displaymath}

As a consequence of Lemma \ref{tech_1} the period of $\tilde{x}$ with respect to the action of $r$ is $s_h$, where 
\begin{displaymath}
s_h:=
\begin{cases}
s'_h & \text{if $[\alpha]^{s'_h} P \in \{ \pm P \}$;}\\
2 s'_h & \text{otherwise.} 
\end{cases}
\end{displaymath}

Consider an element $h = (h_i)_{i=1}^{l}$ in $H$. Let $s_h$ be defined as above. 

We suppose that none of the following conditions is satisfied:
\begin{itemize}
\item $\prod_{i=1}^{l} p_i^{h_i} \in \{1, 2 \}$;
\item $\prod_{i=1}^{l} p_i^{h_i} = 4$ and $h_i \leq 1$ for any $i \in [1,l]$.
\end{itemize}

Indeed, we notice that the only point of order $1$ is the point at infinity, while the rational points having order $2$ have the $y$-coordinate equal to $0$ and consequently their $x$-coordinate belongs to $E_0$.

The number of points $(P, [0])$ in $R / (\delta)$ whose corresponding $x$-coordinate belongs to a cycle of length $s_h$ is $\prod_{i=1}^{l} n_{h_i}$.

If we denote by $C^S_h$ the set of all cycles in $G^{q^n}$ formed by such points, then the following holds in analogy with \cite[Theorem 3.3]{SU35}.

\begin{theorem}\label{thm_c_h}
If $h$ and $C^S_h$ are as above, then
\begin{equation*}
\lvert C^S_{h} \rvert = \frac{1}{2 s_h} \cdot \prod_{i=1}^{l} n_{h_i}.
\end{equation*}
\end{theorem}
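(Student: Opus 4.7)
My plan is to count the relevant $r$-periodic $x$-coordinates in three stages: first count the points $(P,[0]) \in R/I_c \times R/I_t$ whose first coordinate $P = (P_i)_{i=1}^{l}$ has each $P_i$ of prescribed additive order $p_i^{h_i}$; second, pass from these points to the associated $x$-coordinates via the 2-to-1 correspondence that sends the pair $\{P,-P\}$ to a common $x$-coordinate; third, group the resulting $x$-coordinates into cycles, each of common length $s_h$.

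For the counting step, the Chinese Remainder Theorem applied to the factorization (\ref{I_c_fac}) splits $R/I_c$ into the factors $R/\B_i^{e_i}$, so by part (1) of the preceding lemma the number of tuples $P$ with $P_i$ of additive order $p_i^{h_i}$ in $R/\B_i^{e_i}$ is precisely $\prod_{i=1}^{l} n_{h_i}$. I would next argue that, under the exclusions imposed on $h$, every such point on $E(\F_{q^{2n}})$ satisfies $P \neq -P$, so that the $x$-coordinate map is genuinely 2-to-1 on the set being counted (using the first lemma of Section \ref{str} together with Lemma \ref{lem_iff}). The exclusion $\prod_{i} p_i^{h_i} \notin \{1,2\}$ rules out the point at infinity together with the points of additive order $2$, while the second exclusion --- the case $\prod_{i} p_i^{h_i} = 4$ with all $h_i \leq 1$ --- handles the situation where $2$ splits or ramifies and $P$ is a full $2$-torsion point spread across two distinct factors $R/\B_i^{e_i}$. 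Together these guarantee $P \notin E[2]$ and hence $P \neq -P$, yielding $\tfrac{1}{2} \prod_{i=1}^{l} n_{h_i}$ distinct $x$-coordinates.

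Finally, each such $x$-coordinate is $r$-periodic of period exactly $s_h$ by the discussion carried out immediately before the theorem statement, so partitioning this set of $x$-coordinates into cycles of common length $s_h$ gives
\begin{equation*}
\lvert C^S_h \rvert = \frac{1}{2 s_h} \prod_{i=1}^{l} n_{h_i},
\end{equation*}
as claimed. The main obstacle will be the bookkeeping in the second step: matching the listed exclusions on $h$ to the precise structure of $E[2]$ sitting inside the direct product $\prod_i R/\B_i^{e_i}$, particularly when several prime ideals lie over the rational prime $2$. Once this identification is in place the remaining counting is routine, and the formula drops out by a single division by the cycle length.
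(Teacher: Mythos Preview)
Your proposal is correct and matches the paper's approach. The paper does not give a self-contained proof of Theorem \ref{thm_c_h}; it simply states the result ``in analogy with \cite[Theorem 3.3]{SU35}'' after having assembled, in the preceding lemma and discussion, exactly the three ingredients you identify: the count $\prod_{i} n_{h_i}$ of points $(P,[0])$ with the prescribed order vector, the reduction to $x$-coordinates via $\{P,-P\}\mapsto x(P)$ (the paper's exclusions on $h$ being precisely the conditions that force $P\notin E[2]$, as you note), and the common period $s_h$ established just before the theorem. Your write-up merely spells out what the paper leaves implicit, so there is nothing to compare beyond that.
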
 

\subsection{Trees rooted in elements of S}\label{tree_sec}
If $x' \in \Pro(\F_{q^n})$ is not $r$-periodic, then it is preperiodic, namely some iterate of $x'$ is $r$-periodic. Therefore, if 
\begin{equation*}
Q = (Q_c, Q_t) \in R / I_c \times R / I_t
\end{equation*}
is a point in $E(\F_{q^{2n}})$ whose $x$-coordinate is $x'$, then 
\begin{equation*}
[\alpha]^s Q = ([\alpha]^s Q_c, [0])
\end{equation*}
for some positive integer $s$.

Hence the trees rooted in elements of $S$ can be studied relying upon the structure of $R/ I_t$, which we suppose to be isomorphic to
\begin{equation}\label{I_t_fac}
\left( \prod_{i=1}^{j} R / \B_i^{e_i} \right) \times \left( \prod_{i=j+1}^{k} R / \B_i^{e_i} \right) \times \left(\prod_{i=k+1}^{l} R / \B_i^{e_i}  \right),
\end{equation}
where the notations are as in Section \ref{dede_sec}, but the indices $j, k$ and $l$, the ideals $\B_i$ and the integers $e_i$ are not the same as in (\ref{I_c_fac}).

Let 
\begin{equation}\label{alpha_fac}
(\alpha) = \left( \prod_{i=1}^l \B_i^{f_i} \right) \cdot J,
\end{equation}
for some non-negative integers $f_i$, with $i \in [1,l]$,  and some ideal $J$ such that no ideal $\B_i$ appears in the factorization of $J$.

We define
\begin{equation*}
d := \max_{i \in [1,l]} \left\{\left\lceil \frac{e_i}{f_i} \right\rceil: f_i \not = 0  \right\},
\end{equation*}
provided that this latter set is non-empty.

If $\tilde{x} \in \Pro(\F_{q^n})$ is $r$-periodic, then 
\begin{itemize}
\item $T^S (\tilde{x})$ denotes the tree rooted in $\tilde{x}$ and formed by elements of $S$;
\item $T_h^{S} (\tilde{x})$ denotes the set of all vertices belonging to the level $h$ of $T^{S} (\tilde{x})$ for some $h \in [0,d]$;
\item $T^{q^n} (\tilde{x})$ denotes the tree rooted in $\tilde{x}$ and formed by elements of $\Pro(\F_{q^n})$.
\end{itemize}

If $x' \in G^{q^n})$, then we denote by 
\begin{equation*}
\Pre^S (x') := \{x \in S : r(x) = x' \}
\end{equation*}  
the preimage set of $x'$ in $S$ with respect to the map $r$.

\begin{remark}
We notice that $T^S (\tilde{x}) \subseteq T^{q^n} (\tilde{x})$, namely $T^S (\tilde{x})$ is a subtree of $T^{q^n} (\tilde{x})$.

More in detail, we can distinguish the following cases.
\begin{itemize}
\item If $\tilde{x} \in S \backslash E_0$, then any predecessor of $\tilde{x}$ in $T^{q^n} (\tilde{x})$ belongs to $S \backslash E_0$ because of Lemma \ref{lem_a_n_b_n} and Lemma \ref{lem_e_0}. Therefore $T^S (\tilde{x}) = T^{q^n} (\tilde{x})$. 
\item If $\tilde{x} \in E_0$, then $T^{A_n} (\tilde{x})$ and $T^{B_n} (\tilde{x})$ can be proper subsets of $T^{q^n} (\tilde{x})$. Hence, 
\begin{equation*}
T^{q^n} (\tilde{x}) = T^{A_n} (\tilde{x}) \cup T^{B_n} (\tilde{x}),
\end{equation*} 
with $T^{A_n} (\tilde{x}) \cap T^{B_n} (\tilde{x}) \subseteq E_0$. 
\end{itemize}
\end{remark}

\begin{remark}
We notice  that $|\Pre^S (x')|$ is equal to the indegree of the vertex $x'$ in $G^{q^n}$, unless some element of $E_0$ is a preimage of $x'$ and is not contained in $S$.
\end{remark}

We introduce some notations.
\begin{itemize}
\item For any non-negative integer $s$ we define
\begin{equation*}
m_i^s := \min \{e_i, f_i  s \}.
\end{equation*}
\item For any $h \in [1,d]$ we define
\begin{align*}
v_h & := \prod_{i=1}^{l} N (\B_i^{m^h_i}) - \prod_{i=1}^{l} N (\B_i^{m^{h-1}_i}),\\
\tilde{v}_h (\tilde{x}) & := |T_h^{S} (\tilde{x}) \cap E_0|.
\end{align*}

\item If $h \in [0, d-1]$ and $i \in [1,l]$, then we define 
\begin{align*}
\nu_i^h & := N( \B_i^{\min \{f_i, m_i^h \}}),\\
\nu^h & := \prod_{i=1}^l \nu_i^h.
\end{align*}

\item If $x_h \in T^S_h (\tilde{x})$ for some $h \in [0,d-1]$, then we define
\begin{equation*}
\tilde{\nu} (x_h) := |\Pre^{S} (x_h) \cap E_0|.
\end{equation*} 

\end{itemize}

\begin{remark}
We notice that $0 \leq \tilde{v}_h (\tilde{x}) \leq 3$ and $\tilde{v}_h (\tilde{x})$ can be non-zero only if $h \leq 2$, since the rational points (different from the point at infinity) having $x$-coordinate in $E_0$ have order $2$. 
\end{remark}

Using the same notations as in Section \ref{cyc_sec} we get the following.

\begin{theorem}\label{tree_struc}
Let $f_i > 0$ for some $i \in [1,l]$.

\begin{enumerate}[leftmargin=*]
\item The tree $T^{S} (\tilde{x})$ has depth $d$ and, for any $h \in [1,d]$, we have that

\begin{displaymath}
| T^{S}_h (\tilde{x}) | =
\begin{cases}
\frac{1}{2} \cdot \left[ v_h + \tilde{v}_h (\tilde{x}) \right] & \text{if $\tilde{x} \in E_0$,} \\
v_h    & \text{if $\tilde{x} \not \in E_0$.}
\end{cases} 
\end{displaymath}

\item If $x_h \in T_h^S (\tilde{x})$ for some $h \in [0,d-1]$ and $\Pre^S (x_h) \backslash E_0 \not = \emptyset$ , then 
\begin{displaymath}
|\Pre^S (x_h) \backslash E_0| =
\begin{cases}
\frac{1}{2} (\nu^{h+1} - \tilde{\nu} (x_h)) & \text{if $x_h \in E_0$;}\\
\nu^{h+1} & \text{if $x_h \not \in E_0$.}
\end{cases}
\end{displaymath}
\end{enumerate}
\end{theorem} 
\begin{proof}
Let $(P,[0]) \in R/ I_c \times R / I_t \cong R / (\delta)$ be one of the points having $\tilde{x}$ as $x$-coordinate in $E(\F_{q^{2n}})$.

We prove separately the claims.
\begin{enumerate}[leftmargin=*]
\item We notice that any vertex of $T^{S} (\tilde{x})$ is the $x$-coordinate of a point of the form $\tilde{V} = (\tilde{P},Q) \in R / (\delta)$ such that $\alpha^u (\tilde{V}) \in \{ (\pm P, [0]) \}$ for some non-negative integer $u \leq d$. 

According to the factorization (\ref{alpha_fac}) of $(\alpha)$, we have that
\begin{equation*}
d f_i \geq \frac{e_i}{f_i} f_i = e_i
\end{equation*}
for any $i \in [1,l]$ with $f_i > 0$. Therefore the depth of $T^{S} (\tilde{x})$ is at most $d$. 
 
Consider the $x$-coordinate $x'$ of the point $V := ([\alpha]^{-d} P, [1])$. As a consequence of the definition of $d$ we have that $[\alpha]^u V \not \in \{ (\pm P, [0]) \}$ for any non-negative integer $u$ smaller than $d$, while $[\alpha]^d V \in \{(\pm P, [0]) \}$. Hence we  conclude that $T^{S} (\tilde{x})$ has depth $d$. 

Now consider the $x$-coordinate $x'$ of a point $V := (P', Q)$ such that $x'$ belongs to $T^{S}_h (\tilde{x}$) for some $h \geq 1$. Then $h$ is the smallest of the positive integers $u$ such that $\alpha^u (V) \in \{(\pm P, [0]) \}$. In particular, $P' \in \{\pm [\alpha]^{-h} P \}$.

Moreover
\begin{equation*}
Q \in (R/I_t)_h,
\end{equation*}
where
\begin{equation*}
(R/I_t)_h := \left( \prod_{i=1}^{l} \B_i^{e_i-m^h_i} / \B_i^{e_i} \right) \backslash \left( \prod_{i=1}^{l} \B_i^{e_i-m^{h-1}_i} / \B_i^{e_i} \right).
\end{equation*}
We have that
\begin{eqnarray*}
\left| \prod_{i=1}^{l} \B_i^{e_i-m^h_i} / \B_i^{e_i}  \right| & = & \prod_{i=1}^{l} N(\B_i^{m^h_i}),\\
\left| \prod_{i=1}^{l} \B_i^{e_i-m^{h-1}_i} / \B_i^{e_i} \right| & = & \prod_{i=1}^{l} N(\B_i^{m^{h-1}_i}).
\end{eqnarray*}

We deal separately with different cases.

\begin{itemize}
\item \emph{Case 1:} $(P,[0])$ has order $1$ or $2$. 

Then $\tilde{x} \in E_0$ and $P = -P$. The result follows because $([\alpha]^{-h} P, Q)$ and $([\alpha]^{-h} P, -Q)$ have the same $x$-coordinate and $Q \not = - Q$, 
unless $([\alpha]^{-h} P,Q)$ has order $2$.

\item \emph{Case 2:} $(P,[0])$ has order greater than $2$. 

The result follows because $([\alpha]^{-h} P, Q)$ and $(- [\alpha]^{-h} P, Q)$ have different $x$-coordinates, while $([\alpha]^{-h} P, Q)$ and $(- [\alpha]^{-h} P, -Q)$ have the same $x$-coordinate. 
\end{itemize}

\item Let $V_h := ([\alpha]^{-h} P,Q)$ be a point in $R/(\delta)$ whose $x$-coordinate is $x_h$.

The endomorphism $\alpha$ induces a $\nu_i^{h+1}$-$1$ map on  $\B_i^{e_i-m^{h+1}_i} / \B_i^{e_i}$ for any $i \in [1,l]$. Therefore $\alpha$ acts as a $\nu^{h+1}$-$1$ map on $\prod_{i=1}^{l} \B_i^{e_i-m^{h+1}_i} / \B_i^{e_i}$. 

Let 
\begin{equation*}
[\alpha]^{-1} (V_h) := \{([\alpha]^{-(h+1)} P, \tilde{Q}_1), \dots, ([\alpha]^{-(h+1)} P, \tilde{Q}_{\nu^{h+1}})  \}
\end{equation*} 
be the set of preimages of $([\alpha]^{-h} P,Q)$ in $R / (\delta)$.

If $Q \not = -Q$, then $|[\alpha]^{-1} (V_h)| = \nu^{h+1}$, while $|[\alpha]^{-1} (V_h)| = \frac{1}{2} \nu^{h+1}$ if $Q = -Q$.
\end{enumerate}
\end{proof}

\section{Examples}\label{exm}
In this section we construct explicitly some graphs and show how their structure relates to the results previously proved.

We recall that, if $q$ is a power of a prime and $n$ is a positive integer, then 
\begin{equation*}
\Pro (\F_{q^n}) = \{ g^m: m \in [0, n-2] \} \cup \{0, \infty \}
\end{equation*}
for some primitive element $g \in \F_{q^n}$. 

In the examples we label the vertices of $G^{q^n}$ using the exponents $m \in [0,n-2]$, the symbol $\infty$ and the symbol `0' for the zero of the field.

\begin{example}
Consider the ordinary elliptic curve 
\begin{equation*}
E : y^2 = x^3-x
\end{equation*}
defined over $\F_{73}$, whose endomorphism ring $R$ is isomorphic to $\Z[i]$.

Using Sage \cite{sage} we find  an isogeny $\sigma$ of degree $10$ from $E$ to the curve
\begin{equation*}
E' : y^2 = x^3+8x
\end{equation*} 
defined over $\F_{73}$ as 
\begin{equation*}
\sigma(x,y) := (\sigma_1(x), y \cdot \sigma_2(x)),
\end{equation*}
where
\begin{equation*}
\sigma_1(x) := \frac{x^{10}-3 x^8 + 5 x^6 -5 x^4 +3 x^2 -1}{x^9+28x^7-21x^5+28x^3+x}.
\end{equation*}

The curve $E'$ is isomorphic to the curve $E$ via the map
\begin{displaymath}
\begin{array}{rccc}
\psi : & E' & \to & E \\
& (x,y) & \mapsto & (-3x, 22y).
\end{array}
\end{displaymath}

Therefore, there exists an endomorphism $\alpha:=\psi \circ \sigma$ of degree $10$ of $E$ defined as 
\begin{equation*}
\alpha(x,y) = \left(\alpha_1(x), y \cdot \alpha_2(x) \right),
\end{equation*}
where
\begin{equation*}
\alpha_1(x) := -3 \cdot \sigma_1(x).
\end{equation*} 

We define the map $r$ as in the Introduction. 

The graph  associated with the action of $r$ over $\Pro(\F_{73})$ is represented below.

\begin{center}
\begin{picture}(90, 90)(-45,-45)
	\unitlength=2.8pt
    \gasset{Nw=4,Nh=4,Nmr=2,curvedepth=0}
    \thinlines
   	\scriptsize
    \node(A1)(15,0){5}
    \node(A2)(10.6,10.6){15}
    \node(A3)(0,15){39}
    \node(A4)(-10.6,10.6){71}
    \node(A5)(-15,0){41}
    \node(A6)(-10.6,-10.6){51}
    \node(A7)(0,-15){3}
    \node(A8)(10.6,-10.6){35}
    
    \node(B1)(30,0){1}
    \node(B2)(21.2,21.2){31}
    \node(B3)(0,30){21}
    \node(B4)(-21.2,21.2){69}
    \node(B5)(-30,0){37}
    \node(B6)(-21.2,-21.2){67}
    \node(B7)(0,-30){57}
    \node(B8)(21.2,-21.2){33}
    
    \node(C12)(44.1,8.8){10}
    \node(C21)(37.4,25){38}
    \node(C22)(25,37.4){70}
    \node(C31)(8.8,44.1){52}
    \node(C32)(-8.8,44.1){56}
    \node(C41)(-25,37.4){48}
    \node(C42)(-37.4,25){60}
    \node(C51)(-44.1,8.8){46}
    \node(C52)(-44.1,-8.8){62}
    \node(C61)(-37.4,-25){2}
    \node(C62)(-25,-37.4){34}
    \node(C71)(-8.8,-44.1){16}
    \node(C72)(8.8,-44.1){20}
    \node(C81)(25,-37.4){12}
    \node(C82)(37.4,-25){24}
    \node(C11)(44.1,-8.8){26}
 
    \drawedge(A1,A2){}
    \drawedge(A2,A3){}
    \drawedge(A3,A4){}
    \drawedge(A4,A5){}
    \drawedge(A5,A6){}
    \drawedge(A6,A7){}
    \drawedge(A7,A8){}
    \drawedge(A8,A1){}
    
    \drawedge(B1,A1){}
    \drawedge(B2,A2){}
    \drawedge(B3,A3){}
    \drawedge(B4,A4){}
    \drawedge(B5,A5){}
    \drawedge(B6,A6){}
    \drawedge(B7,A7){}
    \drawedge(B8,A8){}
    
    \drawedge(C11,B1){}
    \drawedge(C12,B1){}
    \drawedge(C21,B2){}
    \drawedge(C22,B2){}
    \drawedge(C31,B3){}
    \drawedge(C32,B3){}
    \drawedge(C41,B4){}
    \drawedge(C42,B4){}
    \drawedge(C51,B5){}
    \drawedge(C52,B5){}
    \drawedge(C61,B6){}
    \drawedge(C62,B6){}
    \drawedge(C71,B7){}
    \drawedge(C72,B7){}
    \drawedge(C81,B8){}
    \drawedge(C82,B8){}
\end{picture}
\end{center}

\begin{center}
\begin{picture}(120, 75)(-60,-10)
	\unitlength=2.8pt
    \gasset{Nw=4,Nh=4,Nmr=2,curvedepth=0}
    \thinlines
   	\scriptsize
    \node(A1)(0,0){$\infty$}

    \node(B1)(-12,15){8}
    \node(B2)(-6,15){28}
    \node(B3)(0,15){`0'}
    \node(B4)(6,15){44}
    \node(B5)(12,15){64}

    \node(C1)(-30,30){36}
    \node(C2)(-24,30){6}
    \node(C3)(-18,30){30}
    \node(C4)(-12,30){42}
    \node(C5)(-6,30){66}
    
    \node(C6)(30,30){0}

    \node(D1)(-42,45){14}
    \node(D2)(-36,45){22}
    \node(D3)(-30,45){18}
    \node(D4)(-24,45){40}
    \node(D5)(-18,45){68}
	 
    \node(D6)(42,45){4}
    \node(D7)(36,45){32}
    \node(D8)(30,45){54}
    \node(D9)(24,45){50}
    \node(D10)(18,45){58}	   
   
    \node(E11)(-60,60){9}
    \node(E21)(-54,60){17}
    \node(E31)(-48,60){19}
    \node(E41)(-42,60){27}
    \node(E51)(-36,60){43}
    \node(E61)(-30,60){47}
    \node(E71)(-24,60){49}
    \node(E81)(-18,60){59}
    \node(E91)(-12,60){61}
    \node(E101)(-6,60){65}

    \node(E12)(6,60){7}
    \node(E22)(12,60){11}
    \node(E32)(18,60){13}
    \node(E42)(24,60){23}
    \node(E52)(30,60){25}
    \node(E62)(36,60){29}
    \node(E72)(42,60){45}
    \node(E82)(48,60){53}
    \node(E92)(54,60){55}
    \node(E102)(60,60){63}
 
   	\drawedge(B1,A1){}
	\drawedge(B2,A1){}
    \drawedge(B3,A1){}
    \drawedge(B4,A1){}
    \drawedge(B5,A1){}
  
    \drawedge(C1,B3){}
    \drawedge(C2,B3){}
    \drawedge(C3,B3){}
    \drawedge(C4,B3){}
    \drawedge(C5,B3){}
    \drawedge(C6,B3){}

    \drawedge(D1,C1){}
    \drawedge(D2,C1){}
    \drawedge(D3,C1){}
    \drawedge(D4,C1){}
    \drawedge(D5,C1){}
    
    \drawedge(D6,C6){}
    \drawedge(D7,C6){}
    \drawedge(D8,C6){}
    \drawedge(D9,C6){}
    \drawedge(D10,C6){}
    
    \drawedge(E11,D3){}
    \drawedge(E21,D3){}
    \drawedge(E31,D3){}
    \drawedge(E41,D3){}
    \drawedge(E51,D3){}
    \drawedge(E61,D3){}
    \drawedge(E71,D3){}
    \drawedge(E81,D3){}
    \drawedge(E91,D3){}
    \drawedge(E101,D3){}
    
    \drawedge(E12,D8){}
    \drawedge(E22,D8){}
    \drawedge(E32,D8){}
    \drawedge(E42,D8){}
    \drawedge(E52,D8){}
    \drawedge(E62,D8){}
    \drawedge(E72,D8){}
    \drawedge(E82,D8){}
    \drawedge(E92,D8){}
    \drawedge(E102,D8){}
    
    \drawloop[loopdiam=5,loopangle=-90](A1){}
\end{picture}
\end{center}

The representations in $R$ of the Frobenius endomorphism $\pi_{73}$ and the endomorphism $\alpha$ are 
\begin{align*}
\pi_{73} & = -3 + 8 i,\\
\alpha & = 3 - i.
\end{align*}

If we set $\delta := \pi_{73}-1$, then we have that
\begin{align*}
(\delta) & =  (1+i)^4 \cdot (1-2i),\\
(\alpha) & = (1+i) \cdot (1-2i).
\end{align*}
Given that $(\delta) \subseteq (\alpha)$, the only $r$-periodic element in $A_1$ is $\infty$.

We can study the structure of the tree $T^{A_1} (\infty)$ relying upon 
\begin{equation*}
R / I_t :=  R / \B_1^4  \times R / \B_2,
\end{equation*}
where 
\begin{equation*}
\B_1 := (1+i), \quad \B_2 := (1-2i).
\end{equation*}

Using the notations of Section \ref{tree_sec} we notice that 
\begin{displaymath}
\begin{array}{ll}
e_1 = 4, & e_2 = 1,\\
f_1 = 1, & f_2 = 1,
\end{array}
\end{displaymath}
and $d=4$. 

According to Theorem \ref{tree_struc}, $T^{A_1} (\infty)$ has depth $4$. Moreover, 
\begin{eqnarray*}
|T_1^{A_1} (\infty)| & = & \frac{1}{2} \cdot [2 \cdot 5 - 1 + 1] = 5,\\
|T_2^{A_1} (\infty)| & = & \frac{1}{2} \cdot [4 \cdot 5 - 2 \cdot 5 + 2] = 6,\\
|T_3^{A_1} (\infty)| & = & \frac{1}{2} \cdot [8 \cdot 5 - 4 \cdot 5] = 10,\\
|T_4^{A_1} (\infty)| & = & \frac{1}{2} \cdot [16 \cdot 5 - 8 \cdot 5] = 20.
\end{eqnarray*}

As regards the indegrees of the vertices of $T_1^{A_1} (\infty)$, we notice that
\begin{equation*}
\nu^1 = \nu^2 = \nu^3 = 10.
\end{equation*}

Therefore 
\begin{equation*}
|\Pre^{A_1} (\infty) \backslash E_0| = 5.
\end{equation*}
Moreover, for any vertex $x_h \in T^{A_1} (\infty)$ with $h \in [1,3]$, we have that
\begin{itemize}
\item $|\Pre^{A_1} (x_h) \backslash E_0| \in \{0, 5 \}$ if $x_h \in E_0$;
\item $|\Pre^{A_1} (x_h) \backslash E_0| \in \{0, 10 \}$ if $x_h \not \in E_0$.
\end{itemize}

Now we define $\delta := \pi_{73}+1$. We have that 
\begin{equation*}
(\delta) = (1+i)^2 \cdot (-1+4i).
\end{equation*}

Therefore, the cycles formed by the elements in $B_1$ can be studied relying upon 
\begin{equation*}
R / I_c := R / \B_1,
\end{equation*}
where 
\begin{equation*}
\B_1 := (-1+4i).
\end{equation*}

According to the notations of (\ref{I_c_fac}) we have that $j=1$, $k=1$ and $l=1$, while
\begin{equation*}
H_1 = [0,1].
\end{equation*}

Let $h_1 \in H_1$. We have that
\begin{displaymath}
s_{h_1} = 
\begin{cases}
1 & \text{if $h_1 = 0$,}\\
8 & \text{if $h_1 = 1$}.
\end{cases}
\end{displaymath}

Hence, according to Theorem \ref{thm_c_h}, we have that
\begin{align*}
|C^{B_1}_{(1)}| & = \frac{1}{2 \cdot 8} \cdot 16 = 1.
\end{align*}
Moreover the only cycle in $C^{B_1}_{(1)}$ has length $8$. There is also one cycle of length $1$ formed by $\infty$ (notice in passing that the point at infinity $O$ has order $1$).

Finally we notice that the structure of the tree $T^{B_1} (\tilde{x})$ rooted in some $\tilde{x} \in B_1$ can be studied relying upon 
\begin{equation*}
R / \B_1^2,
\end{equation*}
where $\B_1 := (1+i)$.

According to Theorem \ref{tree_struc}, the tree  $T^{B_1} (\tilde{x})$ has depth $2$ and 
\begin{eqnarray*}
|T_1^{B_1} (\tilde{x})| & = & 1,\\
|T_2^{B_1} (\tilde{x})| & = & 2.
\end{eqnarray*}

In particular, we notice that $T^{B_1} (\infty)$ is a subtree of $T^{A_1} (\infty)$ and that the elements of $E_0$ are vertices of both the trees. Hence $T^{73} (\infty) = T^{A_1} (\infty)$, while $T^{B_1} (\tilde{x}) = T^{73} (\tilde{x})$ for any other $r$-periodic element $\tilde{x}$. 

Finally, since $N(\B_1) = 2$, the indegree of any vertex of the tree is $0$ or $2$ in accordance with Theorem \ref{tree_struc}. 
\end{example} 

\begin{example}\label{exm2}
Consider the ordinary elliptic curve 
\begin{equation*}
E : y^2 = x^3 + 56 x + 34
\end{equation*}
defined over $\F_{83}$, which is the reduction modulo $83$ of the curve having Cremona label 5776g1 over $\Q$ (see \cite{lmfdb}).

Such an elliptic curve has endomorphism ring $R := \Z \left[ \omega  \right]$, where $\omega = \frac{1+\sqrt{-19}}{2}$. We consider the endomorphism 
\begin{equation*}
\alpha (x,y) = (\alpha_1(x), y \cdot \alpha_2 (x))
\end{equation*}
of degree $17$, where $\alpha_1(x) = \frac{a(x)}{b(x)}$ with

\begin{eqnarray*}
a(x) & := & 4 \cdot \left( x^{17} - 32 x^{16} + 13 x^{15} + 12 x^{14} - x^{13} - 32 x^{12} + 32 x^{11} - 10 x^{10} + 8 x^9 \right. \\
& & + 2 x^8 - 8 x^7 - x^6 +x^5 + 35 x^4 + 41 x^3 + 11 x^2 +29 x +22);\\
b(x) & := & \left( x^{16} - 32 x^{15} - 39 x^{14} + 19 x^{13} + 36 x^{12} + 8 x^{11} + 41 x^{10} - 8 x^{9} - 32  x^8 \right. \\
&  & - 16 x^7 - 41 x^6 - 13 x^5 - 3 x^4 + 5 x^3 + 3 x^2 - 10x +23).
\end{eqnarray*}

The following connected components of the graph $G^{83}$ are due to the elements belonging to $A_1$. 

\begin{center}
\begin{picture}(60, 25)(-30,-10)
	\unitlength=2.8pt
    \gasset{Nw=4,Nh=4,Nmr=2,curvedepth=0}
    \thinlines
   	\scriptsize

   \node(C1)(0,0){$\infty$}
  
    \node(D2)(-21,10){17}
    \node(D3)(-15,10){32}
    \node(D4)(-9,10){37}
    \node(D5)(-3,10){40}
	 
    \node(D6)(3,10){44}
    \node(D7)(9,10){70}
    \node(D8)(15,10){74}
    \node(D9)(21,10){3}

    \drawedge(D2,C1){}
    \drawedge(D3,C1){}
    \drawedge(D4,C1){}
    \drawedge(D5,C1){}
    
    \drawedge(D6,C1){}
    \drawedge(D7,C1){}
    \drawedge(D8,C1){}
    \drawedge(D9,C1){}
    
    \drawloop[loopdiam=5,loopangle=-90](C1){}
\end{picture}
\end{center}

\begin{center}
\begin{picture}(80, 80)(-40,-40)
	\unitlength=2.8pt
    \gasset{Nw=4,Nh=4,Nmr=2,curvedepth=0}
    \thinlines
   	\scriptsize

	\node(A1)(10,17.33){53}
	\node(A2)(-20,0){23}
	\node(A3)(10,-17.33){22}  
  
	\node(D1)(40,0){16}
	\node(D2)(38.64,10.35){30}
    \node(D3)(34.65,20){33}
    \node(D4)(28.28,28.28){35}
    \node(D5)(20,34.65){43}
    \node(D6)(10.35,38.64){51}
    \node(D7)(0,40){64}
    \node(D8)(-10.35,38.64){66}
    
    	\node(E1)(-20,34.65){81}
	\node(E2)(-28.28,28.28){69}
    \node(E3)(-34.65,20){68}
    \node(E4)(-38.64,10.35){54}
    \node(E5)(-40,0){52}
    \node(E6)(-38.64,-10.35){12}
    \node(E7)(-34.65,-20){11}
    \node(E8)(-28.28,-28.28){10}
    
    	\node(F1)(-20,-34.65){19}
	\node(F2)(-10.35,-38.64){28}
    \node(F3)(0,-40){38}
    \node(F4)(10.35,-38.64){50}
    \node(F5)(20,-34.65){63}
    \node(F6)(28.28,-28.28){71}
    \node(F7)(34.65,-20){72}
    \node(F8)(38.64,-10.35){78}

   	\drawedge(D1,A1){}
   	\drawedge(D2,A1){}
   	\drawedge(D3,A1){}
   	\drawedge(D4,A1){}
    \drawedge(D5,A1){}
   	\drawedge(D6,A1){}
   	\drawedge(D7,A1){}
   	\drawedge(D8,A1){}
   	
   	\drawedge(E1,A2){}
   	\drawedge(E2,A2){}
   	\drawedge(E3,A2){}
   	\drawedge(E4,A2){}
    \drawedge(E5,A2){}
   	\drawedge(E6,A2){}
   	\drawedge(E7,A2){}
   	\drawedge(E8,A2){}
   	
   	\drawedge(F1,A3){}
   	\drawedge(F2,A3){}
   	\drawedge(F3,A3){}
   	\drawedge(F4,A3){}
    \drawedge(F5,A3){}
   	\drawedge(F6,A3){}
   	\drawedge(F7,A3){}
   	\drawedge(F8,A3){}
    
    \drawedge(A1,A2){}
    \drawedge(A2,A3){}
    \drawedge(A3,A1){}

\end{picture}
\end{center}

The structure of the $2$ connected components above can be explained as follows.

In $R$ we have that
\begin{align*}
\pi_{83} & = 7 + 2 \omega,\\
\alpha & = 3 + \omega.
\end{align*}
Let $\delta:=\pi_{83}-1$. Then
\begin{align*}
(\delta) & = (2) \cdot (3+\omega), \\
(\alpha) & = (3 + \omega). 
\end{align*}

The structure of the cycles formed by elements of $A_1$ can be determined relying upon 
\begin{equation*}
R / I_c := R / \B_1,
\end{equation*}
where $\B_1 := (2)$.

According to the notations of (\ref{I_c_fac}) we have that $j=0$, $k=1$ and $l=1$, while
\begin{equation*}
H_1 = [0,1].
\end{equation*}

Let $h_1 \in H_1$. We have that
\begin{displaymath}
s_{h_1} = 
\begin{cases}
1 & \text{if $h_1 = 0$,}\\
3 & \text{if $h_1 = 1$.}
\end{cases}
\end{displaymath}

We notice that all points in $R / \B_1$ have order $1$ or $2$: the only point of order $1$ is the point at infinity; the points of order $2$ have $x$-coordinate in $E_0$. The cycle formed by $\infty$ has length $1$, while the other elements of $E_0$ form a cycle of length $3$.

The structure of the trees can be studied relying upon
\begin{equation*}
R / I_t := R / \B_1,
\end{equation*}
where $\B_1 := (3+\omega)$.

Therefore $T^{83} (\tilde{x})$ has depth $1$ for any $\tilde{x} \in A_1$. Moreover, according to Theorem \ref{tree_struc}, the number of vertices belonging to $T_1^{83} (\tilde{x})$ is 
\begin{equation*}
| T_1^{83} (\tilde{x})| = \frac{1}{2} \cdot [17 - 1] = 8. 
\end{equation*}  

Now consider the remaining connected components of $G^{83}$.
\begin{center}
\begin{picture}(120, 25)(-57,-5)
	\unitlength=2.8pt
    \gasset{Nw=4,Nh=4,Nmr=2,curvedepth=0}
    \thinlines
   	\scriptsize

	\node(A1)(-57,10){47}
	\node(A2)(-51,10){60}
	\node(A3)(-45,10){0}

	\node(B1)(-36,10){24}
	\node(B2)(-30,10){55}
	\node(B3)(-24,10){1}
	
	\node(C1)(-15,10){61}
	\node(C2)(-9,10){21}
	\node(C3)(-3,10){2} 
	
	\node(D1)(6,10){25}
	\node(D2)(12,10){34}
	\node(D3)(18,10){5}
	
	\node(E1)(27,10){18}
	\node(E2)(33,10){14}
	\node(E3)(39,10){6}
	
	\node(F1)(48,10){31}
	\node(F2)(54,10){42}
	\node(F3)(60,10){7}

   	\drawedge(A3,A2){}
   	\drawedge(A2,A1){}
   	
   	\drawedge(B3,B2){}
   	\drawedge(B2,B1){}
   	
   	\drawedge(C3,C2){}
   	\drawedge(C2,C1){}
   	
   	\drawedge(D3,D2){}
   	\drawedge(D2,D1){}
   	
   	\drawedge(E3,E2){}
   	\drawedge(E2,E1){}
   	
   	\drawedge(F3,F2){}
   	\drawedge(F2,F1){}

	\gasset{curvedepth=-4}   
	
    \drawedge(A1,A3){}
    \drawedge(B1,B3){}
    \drawedge(C1,C3){}
    \drawedge(D1,D3){}
    \drawedge(E1,E3){}
    \drawedge(F1,F3){}

\end{picture}
\end{center}

\begin{center}
\begin{picture}(120, 15)(-57,-5)
	\unitlength=2.8pt
    \gasset{Nw=4,Nh=4,Nmr=2,curvedepth=0}
    \thinlines
   	\scriptsize

	\node(A1)(-57,10){58}
	\node(A2)(-51,10){76}
	\node(A3)(-45,10){8}

	\node(B1)(-36,10){15}
	\node(B2)(-30,10){77}
	\node(B3)(-24,10){13}
	
	\node(C1)(-15,10){36}
	\node(C2)(-9,10){29}
	\node(C3)(-3,10){27} 
	
	\node(D1)(6,10){65}
	\node(D2)(12,10){57}
	\node(D3)(18,10){41}
	
	\node(E1)(27,10){67}
	\node(E2)(33,10){79}
	\node(E3)(39,10){46}
	
	\node(F1)(48,10){`0'}
	\node(F2)(54,10){62}
	\node(F3)(60,10){49}

   	\drawedge(A3,A2){}
   	\drawedge(A2,A1){}
   	
   	\drawedge(B3,B2){}
   	\drawedge(B2,B1){}
   	
   	\drawedge(C3,C2){}
   	\drawedge(C2,C1){}

   	\drawedge(D3,D2){}
   	\drawedge(D2,D1){}
   	
   	\drawedge(E3,E2){}
   	\drawedge(E2,E1){}
   	
   	\drawedge(F3,F2){}
   	\drawedge(F2,F1){}

	\gasset{curvedepth=-4}   
	
    \drawedge(A1,A3){}
    \drawedge(B1,B3){}
    \drawedge(C1,C3){}
    \drawedge(D1,D3){}
    \drawedge(E1,E3){}
    \drawedge(F1,F3){}

\end{picture}
\end{center}

\begin{center}
\begin{picture}(120, 20)(-66,-10)
	\unitlength=2.8pt
    \gasset{Nw=4,Nh=4,Nmr=2,curvedepth=0}
    \thinlines
   	\scriptsize

	\node(A1)(-57,10){4}
	\node(A2)(-48,10){9}
	\node(A3)(-39,10){20}

	\node(B1)(-30,10){26}
	\node(B2)(-21,10){39}
	\node(B3)(-12,10){45}
	
	\node(C1)(-3,10){48}
	\node(C2)(6,10){56}
	\node(C3)(15,10){59} 
	
	\node(D1)(24,10){73}
	\node(D2)(33,10){75}
	\node(D3)(42,10){80}

    \drawloop[loopdiam=5, loopangle=-90](A1){}
    \drawloop[loopdiam=5, loopangle=-90](A2){}
    \drawloop[loopdiam=5, loopangle=-90](A3){}

    \drawloop[loopdiam=5, loopangle=-90](B1){}
    \drawloop[loopdiam=5, loopangle=-90](B2){}
    \drawloop[loopdiam=5, loopangle=-90](B3){}
    
    \drawloop[loopdiam=5, loopangle=-90](C1){}
    \drawloop[loopdiam=5, loopangle=-90](C2){}
    \drawloop[loopdiam=5, loopangle=-90](C3){}

    \drawloop[loopdiam=5, loopangle=-90](D1){}
    \drawloop[loopdiam=5, loopangle=-90](D2){}
    \drawloop[loopdiam=5, loopangle=-90](D3){}

\end{picture}
\end{center}

All the vertices in the connected components of length $1$ and $3$ above are due to elements belonging to $B_1$, which are the $x$-coordinates of the rational points contained in 
\begin{equation*}
R / (\delta),
\end{equation*}
where $\delta := \pi_{83} + 1$. 

We have that 
\begin{equation*}
(\delta) =  (1 - \omega)^2 \cdot (2).
\end{equation*}

Therefore the elements of $B_1$ are $r$-periodic. 

Let
\begin{equation*}
R / I_c :=   R / \B_1^2 \times R / \B_2,
\end{equation*}
where
\begin{displaymath}
\begin{array}{ll}
\B_1  := (1 - \omega), & \B_2  := (2).
\end{array}
\end{displaymath}  

According to the notations of (\ref{I_c_fac}) we have that $j=1$, $k=2$ and $l=2$, while
\begin{equation*}
H_1 = [0,2], \quad H_2 = [0,1].
\end{equation*}

Let $h_1 \in H_1$. We have that
\begin{displaymath}
s_{h_1} = 
\begin{cases}
1 & \text{if $h_1 = 0$,}\\
1 & \text{if $h_1 = 1$,}\\
1 & \text{if $h_1 = 2$.}\\
\end{cases}
\end{displaymath}
Moreover $\alpha^{s_{h_1}} + 1 \in \B_1^{h_1}$ for $h_1 \in \{1, 2 \}$.

Let $h_2 \in H_2$. We have that
\begin{displaymath}
s_{h_2} = 
\begin{cases}
1 & \text{if $h_2 = 0$,}\\
3 & \text{if $h_2 = 1$.}
\end{cases}
\end{displaymath} 
Moreover $\alpha^{s_{h_2}} + 1 \in \B_2$ for $h_2 = 1$.

Hence, according to Theorem \ref{thm_c_h}, we have that
\begin{align*}
|C^{B_1}_{(1,0)}| & = \frac{1}{2 \cdot 1} \cdot 4 = 2,\\
|C^{B_1}_{(2,0)}| & = \frac{1}{2 \cdot 1} \cdot 20 = 10,
\end{align*}
and any cycle in $C^{B_1}_{(1,0)}$ and $C^{B_1}_{(2,0)}$ has length $1$.

Moreover we have that
\begin{align*}
|C^{B_1}_{(1,1)}| & = \frac{1}{2 \cdot 3} \cdot (4 \cdot 3) = 2,\\
|C^{B_1}_{(2,1)}| & = \frac{1}{2 \cdot 3} \cdot (20 \cdot 3) = 10,
\end{align*}
and any cycle in $C^{B_1}_{(1,1)}$ and $C^{B_1}_{(2,1)}$ has length $3$.

In $R / I_c$ we can find one point of order $1$ and $3$ points of order two. Indeed, $\infty$ forms a cycle of length $1$, while the $x$-coordinates of the points of order $2$ form a cycle of length $3$, namely the cycle whose vertices are labelled by $22$, $23$ and $53$. We notice that such a cycle is formed by elements in $A_1$ too.

In conclusion, the elements of $B_1$ form $13$ cycles of length $1$ and $13$ cycles of length $3$. Two of these cycles are also formed by elements of $A_1$.
\end{example}

\begin{example}\label{exm3}
Consider the ordinary elliptic curve 
\begin{equation*}
E : y^2 = x^3 + x + \gamma
\end{equation*}
defined over $\F_{25}$, where $\gamma$ is a primitive element of $\F_{25}$.

The endomorphism ring of the curve $E$ is $R := \Z \left[ \omega  \right]$, where $\omega = \sqrt{-21}$. 

We consider the endomorphism 
\begin{equation*}
\alpha (x,y) = (\alpha_1(x), y \cdot \alpha_2 (x))
\end{equation*}
of degree $22$, where $\alpha_1(x) = \frac{a(x)}{b(x)}$ with

\begin{eqnarray*}
a(x) & := & x^{22} -\gamma  x^{21} -2 \gamma  x^{20} + (-\gamma + 1) x^{19} + (\gamma + 2) x^{18} + x^{17} -2 (\gamma +1) x^{16} \\
& &  -2 (\gamma -1) x^{15} + x^{14} + \gamma x^{13} - (2 \gamma + 1) x^{12} + 2 \gamma x^{11} + (\gamma - 1) x^{10} + 2 \gamma x^9 \\
& &  - (\gamma + 2) x^8 - (\gamma - 2) x^7 - 2 x^6 + (2\gamma + 1) x^5 - (\gamma - 1) x^4 + 2 \gamma x^3 + 2 \gamma x^2 \\
& & + (2 \gamma + 1) x + 2 \gamma;\\
b(x) & := & x^{21} -\gamma x^{20} -2 \gamma x^{19} -(\gamma - 1) x^{18} + (\gamma + 2) x^{17} + x^{16} + -2 (\gamma +1) x^{15} \\
& & -2 (\gamma -1) x^{14} + x^{13} + \gamma x^{12} -(2 \gamma + 1) x^{11} + 2 \gamma x^{10} + (\gamma + 2) x^9 -\gamma x^8 \\
& & -2 (\gamma -1) x^7 + \gamma x^6 + (\gamma + 1) x^5 -(\gamma + 1) x^4 + 2 x^3 - (2\gamma - 1) x^2 -\gamma x \\
& & -2 \gamma - 1.
\end{eqnarray*}

In this example we construct the graph associated with $r$ over $\F_{25^2}$. 

According to the notations of the Introduction, we have 
\begin{equation*}
q = 25, \quad n =2.
\end{equation*}

Using Sage we compute the minimal polynomial $x^2 - 4 x + 25$ of the Frobenius endomorphism $\pi_{25}$, whose representation as an element of $R$ is 
\begin{equation*}
\pi_{25} = 2 + \omega.
\end{equation*}
The representation of $\alpha$ as an endomorphism of $R$ is 
\begin{equation*}
\alpha = 1 + \omega.
\end{equation*}

Since $|\Pro (\F_{25^2}) | = 626$, we introduce some symbols with the aim to have a more compact representation of $G^{25^2}$.
\begin{center}
\begin{picture}(20, 20)(10,-10)
	\unitlength=3.5pt
    \gasset{Nw=4,Nh=4,Nmr=2,curvedepth=0}
    \thinlines
   	\scriptsize

    \node(A1)(0,0){$a$}

    \rpnode[polyangle=150,iangle=45](T1)(0,10)(3,3){}
    \drawedge(T1,A1){} 
    \normalsize
    \node[Nframe=n](stand)(20,5){stands for}
\end{picture}
\begin{picture}(40, 60)(-30,-10)
	\unitlength=3.5pt
    \gasset{Nw=4,Nh=4,Nmr=2,curvedepth=0}
    \thinlines
   	\scriptsize

    \node(A1)(0,0){$a$}
    
    \node(A11)(0,10){}
    
    \node(A111)(-12.5,20){}
    \node(A112)(12.5,20){}    
    
    \node(A1111)(-17.5,30){}
    \node(A1112)(-7.5,30){}
    \node(A1121)(7.5,30){}
    \node(A1122)(17.5,30){}
    
    \node(A11111)(-20,40){}
    \node(A11112)(-15,40){}
    \node(A11121)(-10,40){}
    \node(A11122)(-5,40){}
    \node(A11211)(5,40){}
    \node(A11212)(10,40){}
    \node(A11221)(15,40){}
    \node(A11222)(20,40){}

    \drawedge(A11,A1){} 
    
    \drawedge(A111,A11){} 
    \drawedge(A112,A11){}
    
    \drawedge(A1111,A111){} 
    \drawedge(A1112,A111){}
    \drawedge(A1121,A112){} 
    \drawedge(A1122,A112){}
        
    \drawedge(A11111,A1111){} 
    \drawedge(A11112,A1111){}
    \drawedge(A11121,A1112){} 
    \drawedge(A11122,A1112){}   
    \drawedge(A11211,A1121){} 
    \drawedge(A11212,A1121){}
    \drawedge(A11221,A1122){} 
    \drawedge(A11222,A1122){}
\end{picture}
\end{center}

\begin{center}
\begin{picture}(20, 30)(30,-10)
	\unitlength=3.5pt
    \gasset{Nw=4,Nh=4,Nmr=2,curvedepth=0}
    \thinlines
   	\scriptsize

    \node(A1)(0,0){$a$}

    \gasset{Nw=6,Nh=4,Nmr=0,curvedepth=0}    
    \node(A2)(0,10){$n$ v}
    \drawedge(A2,A1){} 
    \normalsize
    \node[Nframe=n](stand)(40,5){stands for $n$ vertices joined to the vertex $a$}
\end{picture}
\end{center}

Here are the five connected components of $G^{25^2}$.

\begin{center}
\begin{picture}(40, 30)(10,-10)
	\unitlength=3.5pt
    \gasset{Nw=4,Nh=4,Nmr=2,curvedepth=0}
    \thinlines
   	\scriptsize

    \node(A1)(10,0){$104$}
    \node(A2)(30,0){$0$}
    
    \node(B1)(5,10){468}

    \node(B4)(35,10){208}

	\gasset{Nw=6,Nh=4,Nmr=0,curvedepth=0}
	\node(B2)(15,10){20v}
	\node(B3)(25,10){20v}
	
	\node(C1)(5,20){22v}
    \node(C2)(35,20){22v}

    \drawedge(C1,B1){}
    \drawedge(B1,A1){}
    \drawedge(C2,B4){}
    \drawedge(B4,A2){}
   
   \drawedge(B2,A1){}
   \drawedge(B3,A2){}
   
   \gasset{curvedepth=2}
   \drawedge(A1,A2){}
   \drawedge(A2,A1){}

\end{picture}
\begin{picture}(40, 30)(-10,-10)
	\unitlength=3.5pt
    \gasset{Nw=4,Nh=4,Nmr=2,curvedepth=0}
    \thinlines
   	\scriptsize

    \node(A1)(10,0){286}
       
    \node(B1)(5,10){`0'}

	\gasset{Nw=6,Nh=4,Nmr=0,curvedepth=0}
	\node(B2)(15,10){20v}
		
	\node(C1)(5,20){22v}

   \drawedge(B2,A1){}
   \drawedge(B1,A1){}
    \drawedge(C1,B1){}

    \drawloop[loopdiam=5,loopangle=-90](A1){}
\end{picture}
\end{center}

\begin{center}
\begin{picture}(100, 100)(-50,-50)
	\unitlength=3.5pt
    \gasset{Nw=4,Nh=4,Nmr=2,curvedepth=0}
    \thinlines
   	\scriptsize

    \node(A1)(0,10){390}
    \node(A2)(-10,0){572}
    \node(A3)(0,-10){156}
    \node(A4)(10,0){546}
    
    \node(B11)(-5,20){442}
    
    \node(B21)(-20,-5){52}
    
    \node(B31)(5,-20){338}
    
    \node(B41)(20,5){520}

	\gasset{Nw=6,Nh=4,Nmr=0,curvedepth=0}
	\node(B12)(5,20){20v}
	\node(C11)(-5,30){22v}
	
	\node(B22)(-20,5){20v}
	\node(C21)(-30,-5){22v}
	
	\node(B32)(-5,-20){20v}
	\node(C31)(5,-30){22v}
	
	\node(B42)(20,-5){20v}
	\node(C41)(30,5){22v}

    \drawedge(A1,A2){}
    \drawedge(A2,A3){}
    \drawedge(A3,A4){}
    \drawedge(A4,A1){}
  
   \drawedge(B12,A1){}
   \drawedge(B11,A1){}
   \drawedge(C11,B11){}
   
   \drawedge(B22,A2){}
   \drawedge(B21,A2){}
   \drawedge(C21,B21){}
   
   \drawedge(B32,A3){}
   \drawedge(B31,A3){}
   \drawedge(C31,B31){}
   
   \drawedge(B42,A4){}
   \drawedge(B41,A4){}
   \drawedge(C41,B41){}
\end{picture}
\end{center}
\begin{center}
\begin{picture}(100, 70)(0,-10)
	\unitlength=3.5pt
    \gasset{Nw=4,Nh=4,Nmr=2,curvedepth=0}
    \thinlines
   	\scriptsize

    \node(A1)(50,5){$\infty$}
  
    \node(B11)(25,15){364}

    \node(C111)(5,25){287}
    \node(C112)(25,25){311}

    \node(D1111)(5,35){97}
    \node(D1121)(25,35){553}

    \node(E11111)(0,45){301}
    \node(E11112)(10,45){521}
    \node(E11211)(20,45){37}
    \node(E11212)(30,45){545}

	\gasset{Nw=6,Nh=4,Nmr=0,curvedepth=0}
    \node(B12)(75,15){10v}
    \node(C113)(45,25){10v}

    \drawedge(B11,A1){}
    \drawedge(B12,A1){}
    
    \drawedge(C111,B11){}
    \drawedge(C112,B11){}
    \drawedge(C113,B11){}
    
    \drawedge(D1111,C111){}
    \drawedge(D1121,C112){}
    
    \drawedge(E11111,D1111){}
    \drawedge(E11112,D1111){}
    \drawedge(E11211,D1121){}
    \drawedge(E11212,D1121){}
    
    \drawloop[loopdiam=5,loopangle=-90](A1){}
\end{picture}
\end{center}

\begin{center}
\begin{picture}(100, 80)(-50,-40)
	\unitlength=3.5pt
    \gasset{Nw=4,Nh=4,Nmr=2,curvedepth=0}
    \thinlines
   	\scriptsize

    \node(A1)(20,0){$144$}
    \node(A2)(18.8,6.84){429}
    \node(A3)(15.32,12.86){457}
    \node(A4)(10,17.32){513}
    \node(A5)(3.48,19.7){336}
    \node(A9)(-18.8,6.84){28}
    \node(A8)(-15.32,12.86){504}
    \node(A7)(-10,17.32){322}
    \node(A6)(-3.48,19.7){412}
    \node(A10)(-20,0){480}
    \node(A11)(-18.8,-6.84){117}
    \node(A12)(-15.32,-12.86){193}
    \node(A13)(-10,-17.32){345}
    \node(A14)(-3.48,-19.7){288}
    \node(A18)(18.8,-6.84){76}
    \node(A17)(15.32,-12.86){120}
    \node(A16)(10,-17.32){562}
    \node(A15)(3.48,-19.7){316}
    
    \rpnode[polyangle=60,iangle=45](T1)(30,0)(3,3){}
    \rpnode[polyangle=80,iangle=45](T2)(28.19,10.26)(3,3){}
    \rpnode[polyangle=100,iangle=45](T3)(22.98,19.28)(3,3){}
    \rpnode[polyangle=120,iangle=45](T4)(15,25.98)(3,3){}
    \rpnode[polyangle=140,iangle=45](T5)(5.21,29.54)(3,3){}
    \rpnode[polyangle=220,iangle=45](T9)(-28.19,10.26)(3,3){}
    \rpnode[polyangle=200,iangle=45](T8)(-22.98,19.28)(3,3){}
    \rpnode[polyangle=180,iangle=45](T7)(-15,25.98)(3,3){}
    \rpnode[polyangle=160,iangle=45](T6)(-5.21,29.54)(3,3){}
    \rpnode[polyangle=240,iangle=45](T10)(-30,0)(3,3){}
    \rpnode[polyangle=260,iangle=45](T11)(-28.19,-10.26)(3,3){}
    \rpnode[polyangle=280,iangle=45](T12)(-22.98,-19.28)(3,3){}
    \rpnode[polyangle=300,iangle=45](T13)(-15,-25.98)(3,3){}
    \rpnode[polyangle=320,iangle=45](T14)(-5.21,-29.54)(3,3){}
    \rpnode[polyangle=400,iangle=45](T18)(28.19,-10.26)(3,3){}
    \rpnode[polyangle=380,iangle=45](T17)(22.98,-19.28)(3,3){}
    \rpnode[polyangle=360,iangle=45](T16)(15,-25.98)(3,3){}
    \rpnode[polyangle=340,iangle=45](T15)(5.21,-29.54)(3,3){}

    \drawedge(A1,A2){}    
    \drawedge(A2,A3){}
    \drawedge(A3,A4){}
    \drawedge(A4,A5){}
    \drawedge(A5,A6){}    
    \drawedge(A6,A7){}
    \drawedge(A7,A8){}
    \drawedge(A8,A9){}
    \drawedge(A9,A10){}    
    \drawedge(A10,A11){}
    \drawedge(A11,A12){}
    \drawedge(A12,A13){}
    \drawedge(A13,A14){}    
    \drawedge(A14,A15){}
    \drawedge(A15,A16){}
    \drawedge(A16,A17){}
    \drawedge(A17,A18){}
    \drawedge(A18,A1){}

    \drawedge(T1,A1){}
    \drawedge(T2,A2){}    
    \drawedge(T3,A3){}
    \drawedge(T4,A4){}
    \drawedge(T5,A5){}
    \drawedge(T6,A6){}    
    \drawedge(T7,A7){}
    \drawedge(T8,A8){} 
    \drawedge(T9,A9){}
    \drawedge(T10,A10){}    
    \drawedge(T11,A11){}
    \drawedge(T12,A12){}
    \drawedge(T13,A13){}
    \drawedge(T14,A14){}    
    \drawedge(T15,A15){}
    \drawedge(T16,A16){}
    \drawedge(T17,A17){}
    \drawedge(T18,A18){} 
\end{picture}
\end{center}

Now we analyse the structure of the connected components of $G^{25^2}$ formed by the elements of $A_2$.

If $\delta := \pi_{25}^2-1$, then 
\begin{align*}
(\delta) & = I_1 \cdot I_2 \cdot I_3^2 \cdot I_4,\\
(\alpha) & = I_2 \cdot I_3, 
\end{align*}
where 
\begin{align*}
I_1 & := (5,\omega+3),\\
I_2 & := (11, \omega+1),\\
I_3 & := (2, \omega+1),\\
I_4 & := (3, \omega),
\end{align*}
with $N(I_1) = 5$, $N(I_2) = 11$, $N(I_3) = 2$ and $N(I_4) = 3$.

The structure of the cycles formed by elements of $A_2$ can be determined relying upon 
\begin{equation*}
R / I_c := R / \B_1 \times R / \B_2,
\end{equation*}
where 
\begin{align*}
\B_1 & := I_1,\\
\B_2 & := I_4. 
\end{align*}

According to the notations of (\ref{I_c_fac}) we have that $j=1$, $k=1$ and $l=2$, while
\begin{equation*}
H_1 = [0,1], \quad H_2 = [0,1].
\end{equation*}

Let $h_1 \in H_1$. We have that
\begin{displaymath}
s_{h_1} = 
\begin{cases}
1 & \text{if $h_1 = 0$,}\\
2 & \text{if $h_1 = 1$.}
\end{cases}
\end{displaymath}
Moreover $\alpha + 1 \in \B_1$.

Let $h_2 \in H_2$. We have that
\begin{displaymath}
s_{h_2} = 
\begin{cases}
1 & \text{if $h_2 = 0$,}\\
1 & \text{if $h_2 = 1$.}
\end{cases}
\end{displaymath} 
Moreover $\alpha - 1 \in \B_2$.

Hence, according to Theorem \ref{thm_c_h}, we have that
\begin{align*}
|C^{A_2}_{(0,1)}| & = \frac{1}{2 \cdot 1} \cdot (1 \cdot 2) = 1,\\
|C^{A_2}_{(1,0)}| & = \frac{1}{2 \cdot 2} \cdot (4 \cdot 1) = 1,\\
|C^{A_2}_{(1,1)}| & = \frac{1}{2 \cdot 4} \cdot (4 \cdot 2) = 1,
\end{align*}
where the unique cycle in $C^{A_2}_{(0,1)}$ (resp. $C^{A_2}_{(1,0)}$, $C^{A_2}_{(1,1)}$) has length $1$ (resp. 2, 4). Such cycles can be found in the first three connected components of the graph. Finally, we notice that the point $([0], [0]) \in R / I_c$ has order $1$. Indeed, $\infty$ forms a cycle of length $1$.

Before proceeding we notice that 
\begin{equation*}
E_0 = \{\infty, g^{287}, g^{311}, g^{364} \},
\end{equation*}
where $g$ is a generator of $\F_{625}$.

Now we study the structure of the trees rooted in elements of $A_2$ relying upon 
\begin{equation*}
R / I_t := R / \B_1 \times R / \B_2^2,
\end{equation*}
where 
\begin{align*}
\B_1 & := I_2,\\
\B_2 & := I_3. 
\end{align*}

If $\tilde{x} \in A_2 \backslash E_0$, then 
\begin{align*}
|T_1^{A_2} (\tilde{x})| & = 11 \cdot 2 - 1 = 21,\\
|T_2^{A_2} (\tilde{x})| & = 11 \cdot 4 - 11 \cdot 2 = 22,
\end{align*}
while
\begin{align*}
|T_1^{A_2} (\infty)| & = \frac{1}{2} \cdot [11 \cdot 2 - 1 + 1 ] =11,\\
|T_2^{A_2} (\infty)| & = \frac{1}{2} \cdot [11 \cdot 4 - 22 + 2] = 12.
\end{align*}

We also notice that the indegree of any vertex belonging to a tree rooted in some elements of the first three connected components is $22$ since 
\begin{equation*}
\nu^1 = \nu^2 = 22
\end{equation*}
and no vertex in such components belongs to $E_0$.

Now we study the structure of the connected components formed by the elements of $B_2$.

If $\delta := \pi_{25}^2+1$, then 
\begin{align*}
(\delta) & = I_5 \cdot I_3^4,
\end{align*}
where $I_3$ is as before, while
\begin{align*}
I_5 & := (\omega-4),
\end{align*}
with $N(I_5) = 37$.

The structure of the cycles formed by the elements of $B_2$ can be determined relying upon 
\begin{equation*}
R / I_c := R / \B_1,
\end{equation*}
where $\B_1  := I_5$.

Following the notations of (\ref{I_c_fac}) we have that $j=1$, $k=1$ and $l=1$, while
\begin{equation*}
H_1 = [0,1].
\end{equation*}

Let $h_1 \in H_1$. We have that
\begin{displaymath}
s_{h_1} = 
\begin{cases}
1 & \text{if $h_1 = 0$,}\\
18 & \text{if $h_1 = 1$.}
\end{cases}
\end{displaymath}
Moreover $\alpha^{18} + 1 \in \B_1$.

Hence, according to Theorem \ref{thm_c_h}, we have that
\begin{align*}
|C^{B_2}_{(1)}| & = \frac{1}{2 \cdot 18} \cdot (36) = 1,
\end{align*}
where the unique cycle in $C^{B_2}_{(1)}$ has length $18$. Such a cycle can be found in the last connected component of the graph. As above, the point $([0]) \in R / I_c$ has order $1$ and $\infty$ forms a cycle of length $1$.

Now we study the structure of the trees rooted in elements of $B_2$ relying upon 
\begin{equation*}
R / I_t := R / \B_1^4,
\end{equation*}
where $\B_1 := I_3$. 

Using the notations of Section \ref{tree_sec} we have that 
\begin{displaymath}
\begin{array}{ll}
e_1 = 4,\\
f_1 = 1,
\end{array}
\end{displaymath}
and $d=4$. 

If $\tilde{x} \in B_2 \backslash E_0$, then 
\begin{align*}
|T_1^{B_2} (\tilde{x})| & = 2 - 1 = 1,\\
|T_2^{B_2} (\tilde{x})| & = 4 - 2 = 2,\\
|T_3^{B_2} (\tilde{x})| & = 8 - 4 = 4,\\
|T_4^{B_2} (\tilde{x})| & = 16 - 2 = 8,\\
\end{align*}
while
\begin{align*}
|T_1^{B_2} (\infty)| & = \frac{1}{2} \cdot [2 - 1 + 1 ] = 1,\\
|T_2^{B_2} (\infty)| & = \frac{1}{2} \cdot [4 - 2 + 2] = 2,\\
|T_3^{B_2} (\infty)| & = \frac{1}{2} \cdot [8 - 4 ] = 2,\\
|T_4^{B_2} (\infty)| & = \frac{1}{2} \cdot [16 - 8] = 4.
\end{align*}
\end{example}

\bibliography{Refs}
\end{document}